\documentclass[11pt,leqno]{article}

\usepackage{amsthm,amsfonts,amssymb,amsmath,oldgerm}
\usepackage{fullpage}
\usepackage{graphicx}
\usepackage{mathrsfs}
\usepackage{xcolor}

\numberwithin{equation}{section}

\renewcommand\d{\partial}

\def\eps {\varepsilon}

\renewcommand{\L}{\mathcal{L}}




\newcommand{\RM}{{\mathbb{R}}}
\newcommand{\CM}{{\mathbb{C}}}
\newcommand{\NM}{{\mathbb{N}}}
\newcommand{\ZM}{{\mathbb{Z}}}
\newtheorem{theorem}{Theorem}[section]
\newtheorem{proposition}[theorem]{Proposition}

\newtheorem{lemma}[theorem]{Lemma}
\newtheorem{remark}[theorem]{Remark}
\theoremstyle{definition}
\newtheorem{definition}[theorem]{Definition}
\newtheorem{assumption}[theorem]{Assumption}

\allowdisplaybreaks[3]

\title{Subharmonic Dynamics of Wave Trains in the Korteweg-de Vries / Kuramoto-Sivashinsky Equation}

\author{Mathew~A.~Johnson\thanks{Department of Mathematics, University of Kansas, 1460 Jayhawk Boulevard, 
Lawrence, KS 66045, USA; matjohn@ku.edu}\quad\&\quad Wesley R. Perkins\thanks{Department of Mathematics, Lehigh University, 17 Memorial Drive East, 
Bethlehem, PA 18015, USA; wesley.perkins@lehigh.edu} }

\date{\today}


\begin{document}

\maketitle

\begin{abstract}
We study the stability and nonlinear local dynamics of spectrally stable periodic wave trains of the Korteweg-de Vries / Kuramoto-Sivashinsky equation
when subjected to classes of periodic perturbations.  It is known that for each $N\in\NM$, such a $T$-periodic wave train is asymptotically stable to $NT$-periodic, i.e., subharmonic, perturbations, in the sense
that initially nearby data will converge asymptotically to a small Galilean boost of the underlying wave, with exponential rates of decay.
However, both the allowable size of initial perturbations and the exponential rates of decay depend on $N$ and, in fact, tend to zero as $N\to\infty$, leading
to a lack of uniformity in such subharmonic stability results.  Our goal here is to build upon a recent methodology introduced by the authors in the reaction-diffusion setting
and achieve a subharmonic stability result which is uniform in $N$.   This work is motivated by the dynamics of such wave trains when subjected to perturbations
which are localized (i.e., integrable on the line).
\end{abstract}

\section{Introduction}\label{S:intro}

In this work, we consider the local dynamics of wave trains, i.e., periodic traveling wave solutions, of the Korteweg-de Vries / Kuramoto-Sivashinsky (KdV/KS) equation
\begin{equation}\label{e:ks}
u_t+\eps u_{xxx}+\delta\left(u_{xx}+u_{xxxx}\right)+uu_x=0
\end{equation}
where $x,t\in\RM$.  Here, $\eps\geq 0$ and $\delta>0$ are modeling parameters which may, without loss of generality, be chosen such that $\eps^2+\delta^2=1$: see
Remark \ref{r:scaling} below.
The equation \eqref{e:ks} is known to be a canonical model for pattern formation that has been used to describe many applications including 
plasma instabilities, turbulence in reaction diffusion equations, flame front propagation,
and nonlinear wave dynamics in fluid mechanics \cite{S1,S2,SM,K,KT}.  In the case $\eps=0$ and $\delta=1$, equation \eqref{e:ks} becomes the classical Kuramoto-Sivashinsky equation, which
is known to be a generic equation for chaotic dynamics, and there is a very large literature on these solutions, their bifurcations and period doubling cascades, and their stability:
see, for example, \cite{CD} and reference therein.  
In this $\eps=0$ case, \eqref{e:ks} is also known to model thin film dynamics down a completely vertical wall \cite{FST}.  When the angle of the wall is decreased from vertical, however,
additional dispersive effects are present \cite{CD} and modeled by $\eps>0$, and in the ``flat" limit where the inclined wall becomes horizontal one recovers the completely
integrable Korteweg-de Vries
equation, corresponding here to $\eps=1$ and $\delta=0$.  Thus, in the context of inclined thin film flow one can consider the general model \eqref{e:ks} as interpolating between the 
vertical wall $(\eps,\delta)=(0,1)$ and the ``flat" limit $(\eps,\delta)\to(1^-,0^+)$.  For more information on the connection to thin film dynamics, as well as its derivation in this
context from the viscous shallow water equations or the full Navier Stokes equations, see \cite{Wi,YY}.

\begin{remark}\label{r:scaling}
In the literature, one may encounter more general looking systems of the form  
\[
u_t+\eps u_{xxx}+ \delta u_{xx} + \gamma u_{xxxx} + \Lambda uu_x=0
\]
where here $\eps\in\RM$ and $\delta,\gamma,\Lambda>0$ are arbitrary constants.  However, we note that, through a rescaling argument, such systems can always be
put in the form \eqref{e:ks}, i.e., one can always take $\Lambda=1$ and $\delta=\gamma$ with $\eps^2+\delta^2=1$.  See \cite[Section 2]{BJNRZ13}.
Note the particular scaling here is sometimes referred to as the ``thin film" scaling: see \cite{CD} for example.
\end{remark}

In this work, we are interested in understanding the stability and long-time dynamics of periodic traveling wave solutions of \eqref{e:ks}
to specific classes of perturbations.  To begin our discussion, we briefly discuss the existence theory for 
periodic solutions of \eqref{e:ks}.  First, note that traveling wave solutions of \eqref{e:ks} correspond to solutions of the form $u(x,t) = \bar{u}(x-ct)$, where
$\bar{u}$ necessarily satisfies the profile ODE
\begin{equation}\label{e:profile}
-c\bar{u}'+\eps\bar{u}'''+\delta\left(\bar{u}''+\bar{u}''''\right)+\bar{u}\bar{u}'=0.
\end{equation}
The existence and local structure of periodic solutions of \eqref{e:profile} has been studied extensively by several other authors.  For general
values of $(\eps,\delta)$, an elementary Hopf bifurcation analysis \cite{BJNRZ13} shows the existence of 
a $3$-parameter family of asymptotically small amplitude periodic traveling wave solutions of \eqref{e:profile} which, up to translation, can be parametrized by the wave
speed $c$ and the period $T$.  In the ``classical KS'' limit $|\eps|\ll 1$, one can likewise use normal form analysis to establish a similar existence result \cite{CD}, while
the full bifurcation picture for the KS equation ($\eps=0$) is known to be extremely complicated: see, for example, \cite{KE}, where the authors prove the existence of a Shi'lnikov bifurcation
which leads to cascades of period doubling, period multiplying $k$-bifurcations and oscillatory homoclinic orbits as the period is increased, as well as the numerical bifurcation
study in \cite{BKJ}.  For a summary and more details, see \cite{BJNRZ13,JNRZ15}.

Using the above existence studies as motivation, and closely following the work in \cite{BJNRZ13}, we make the following assumption regarding the existence of periodic solutions of \eqref{e:profile},
as well as the structure of the local manifold of periodic solutions.

\begin{assumption}\label{a:exist}
Suppose $\bar{u}(\cdot)$ is a $T$-periodic solution of \eqref{e:profile} with speed $c$.  Then the set of periodic solutions near $\bar{u}$ forms
a $3$-dimensional manifold
\[
\left\{(x,t)\mapsto u(x-\gamma-(c+\delta c)t;X):\gamma\in\RM,~(\delta c,X)\in\Omega\right\}
\]
where $\Omega\subset\RM^{2}$ is some open set containing the point $(\delta c, X) = (0,T)$ and each $u(\cdot;X)$ is an $X$-periodic solution to \eqref{e:profile} with 
wave speed $c+\delta c$.
\end{assumption}

\begin{remark}
It is natural to assume \eqref{e:profile} admits a $3$-dimensional manifold of periodic solutions.  Indeed, note that integrating the profile equation \eqref{e:profile}
once yields
\[
-c\bar{u}+\eps\bar{u}''+\delta\left(\bar{u}'+\bar{u}'''\right)+\frac{1}{2}\bar{u}^2=q
\]
for some constant of integration $q\in\RM$.    Periodic solutions of \eqref{e:profile} thus correspond to values
\[
(T,c,q,\bar{u}(0),\bar{u}'(0),\bar{u}''(0))\in\RM^6,
\]
where $T$, $c$, and $q$ denote the period, wave speed and constant of integration, subjected to the periodicity condition
\[
\left(\bar{u}(T),\bar{u}'(T),\bar{u}''(T)\right)=\left(\bar{u}(0),\bar{u}'(0),\bar{u}''(0)\right).
\]
leading, generically, to a $3$-parameter family of periodic solutions parametrized, up to translation, by the period $T$, the wave speed $c$.
\end{remark}

%
%
%
%

\

As stated above, our main goal is to study the stability and dynamics of periodic traveling wave solutions of \eqref{e:ks} to specific classes of perturbations.
Previously, there has been much work regarding both the spectral and nonlinear stability of such
periodic solutions when subjected to localized perturbations, i.e., perturbations that are integrable on the line \cite{BJNRZ13,BJNRZ12,JNRZ15}, as well
as their dynamics under slow-modulations \cite{NR13}.  In these works, it is found that for each admissible pair $(\eps,\delta)$ in \eqref{e:ks}
there exist periodic traveling wave solutions which are spectrally and nonlinearly stable to localized perturbations.
Here, however, we study the stability and dynamics of $T$-periodic traveling wave solutions of \eqref{e:ks} 
when subject to so-called \emph{subharmonic perturbations}, i.e., $NT$-periodic perturbations for some $N\in\NM$.
Before we continue, we note that an extremely important feature of \eqref{e:ks}, which we utilize heavily in our forthcoming analysis, 
is the presence of a Galilean symmetry.  In particular, if $u(x,t)$ is a solution of \eqref{e:ks}, then so is the function
\begin{equation}\label{galilean}
u(x-ct,t)+c
\end{equation}
for any $c\in\RM$.
Thanks to this Galilean invariance, as well as the translational invariance of \eqref{e:ks},
 it follows that the stability of a particular wave only depends on one parameter: namely, the period $T$.  Thus, when
discussing the stability of periodic traveling wave solutions of the KdV-KS equation \eqref{e:ks}, we identify all waves of a particular period.  Furthermore, note
we can view \eqref{galilean} as coupling between the wave speed and the mass of the wave, defined via
\[
M:=\int_0^Tu(x,t)dx,
\]
which is readily seen to be a conserved quantity of \eqref{e:ks} due to the conservative structure.

Now, suppose that $\bar{u}(x)$ is a $T$-periodic solution of \eqref{e:profile} with wave speed $c$, and note the linearization of \eqref{e:ks} (in the appropriate co-moving frame) about $\bar{u}$ 
is given by the operator
\[
\L:=\partial_x\left(c-\bar{u}-\partial_x^2\right)-\delta\left(\partial_x^2+\partial_x^4\right). 
\]
Since we are considering subharmonic perturbations of $\bar{u}$, i.e., perturbations with period $NT$ for some $N\in\NM$, here we consider
$\L$ as a closed, densely defined linear operator acting on $L^2_{\rm per}(0,NT)$ with $T$-periodic coefficients.  To describe the spectrum
of $\L$ acting on $L^2_{\rm per}(0,NT)$, first observe that one always has that
\begin{equation}\label{ker1}
\L\bar{u}_x=0\quad{\rm and}\quad \L(1)=-\bar{u}_x
\end{equation}
so that $\lambda=0$ is an eigenvalue of $\L$ with algebraic multiplicity at least two and a Jordan chain of height at least one.  
With this in mind, and following the works \cite{GS97,GS96,JNRZ_Invent,JZ11},  we introduce the notion of spectral stability used throughout this work.

\begin{definition}\label{d:spec_stab}
A $T$-periodic traveling wave solution $\bar{u}\in H^1_{\rm loc}(\RM)$ of \eqref{e:profile} is said to be \emph{diffusively spectrally stable} provided the following
conditions hold:
\begin{itemize}
\item[(D1)] The spectrum of the linear operator $\L$ acting on $L^2(\RM)$ satisfies
\[
\sigma_{L^2(\RM)}\left(\L\right)\subset\left\{\lambda\in\RM:\Re(\lambda)<0\right\}\cap\{0\};
\]
\item[(D2)] There exists a $\theta>0$ such that for any $\xi\in[-\pi,\pi)$ the real part of the spectrum of the Bloch operator $\L_\xi:=e^{-i\xi x}\L e^{i\xi x}$
acting on $L^2_{\rm per}(0,1)$ satisfies
\[
\Re\left(\sigma_{L^2_{\rm per}(0,1)}\left(\L_\xi\right)\right)\leq-\theta\xi^2;
\]
\item[(D3)] $\lambda=0$ is a $T$-periodic eigenvalue of $\L_0[\phi]$ with algebraic multiplicity two and geometric multiplicity one. 
\end{itemize}
\end{definition}

\begin{figure}[t]
\begin{center}
\includegraphics[scale=1.3]{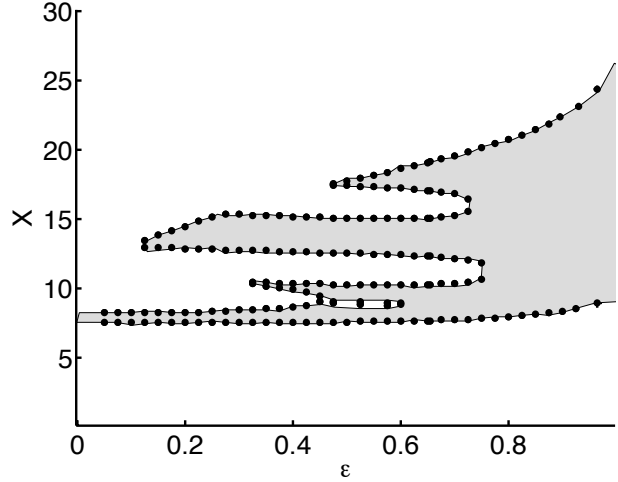}
\caption{Plot of the numerically obtained stability boundaries, computed in \cite[Section 2]{BJNRZ13},
in the period (denoted here as $X$) versus the parameter $\eps$ in \eqref{e:ks}.  Note that $\delta=\sqrt{1-\eps^2}$
is fixed by the choice of $\eps$ and that, by Galilean invariance, the stability of such a wave depends only on its period.  
In this figure, the shaded regions correspond to diffusively spectrally stable periodic traveling wave solutions of \eqref{e:ks}.
}\label{f:stab_bdry}
\end{center}
\end{figure}

We note that in \cite{BJNRZ13} it was shown that for every admissible pair of modeling parameters $(\eps,\delta)$ there exists a range of periods for which
all periodic traveling wave solutions of \eqref{e:ks} with those period are diffusively spectrally stable: see Figure \ref{f:stab_bdry} and also \cite{BAN,CDK}.
In the context of the KdV/KS equation \eqref{e:ks}, it is known that diffusively spectrally stable periodic traveling wave solutions (coupled with an additional
non-degeneracy hypothesis, see Assumption \ref{a:non-deg} below) are nonlinearly stable
to localized perturbations, in the sense that localized perturbations of such a periodic waves $\bar{u}$ converges to spatially localized phase modulations of $\bar{u}$.
Specifically, given such a periodic traveling wave $\bar{u}$ and initial data
\[
u(x,0) = \bar{u}(x)+v(x,0),\quad \|v(\cdot,0)||_{L^1(\RM)\cap H^5(\RM)}\ll 1
\]
then for large time the associated solution $u(x,t)$ satisfies
\[
u(x,t)\approx \bar{u}(x+\Psi(x,t)),~~~t\gg 1
\]
for some function $\Psi(\cdot,t)\in W^{5,\infty}(\RM)$ which behaves essentially like a finite sum of (small) error functions: see \cite{BJNRZ13}.
As we will see below, the implication of diffusive spectral stability will be that such a $T$-periodic traveling wave is necessarily spectrally stable
to perturbations in $L^2_{\rm per}(0,NT)$ for every $N\in\NM$.

Before continuing to state our main result, we introduce an additional non-degeneracy hypothesis.  As we will see in Lemma \ref{l:spec_prep} below, 
Assumption \ref{a:exist}, along with the diffusive spectral stability assumption, implies that for $|\xi|$ small the Bloch operators $\L_\xi$ have
two eigenvalues near the origin which expand as
\begin{equation}\label{e:crit_bif}
\lambda_j(\xi)=-ia_j\xi+o\left(\xi\right),~~a_j\in\RM,~~j=1,2
\end{equation}
for $|\xi|\ll 1$.
Following the previous work \cite{BJNRZ13,JNRZ_Invent}, we make the following additional  non-degeneracy hypothesis:

\begin{assumption}\label{a:non-deg}
The coefficients $a_j\in\RM$ in \eqref{e:crit_bif} are distinct.
\end{assumption}

By standard spectral perturbation theory, Assumption \ref{a:non-deg} ensures the analyticity of the functions $\lambda_j(\cdot)$ in \eqref{e:crit_bif}:
again, see Lemma \ref{l:spec_prep} below.
Furthermore, it is known that the coefficients $a_j$ are the characteristics of an associated Whitham averaged system, formally governing
slowly modulated periodic solutions of \eqref{e:ks}.  Consequently, Assumption \ref{a:non-deg} corresponds to strict hyperbolicity of the
Whitham averaged system.  See \cite{NR13} for more details.  Note also that all the diffusively spectrally stable periodic traveling wave solutions
in Figure \ref{f:stab_bdry} were seen numerically to satisfy Assumption \ref{a:non-deg}.

\

We now begin our discussion of our main results concerning the dynamics of $T$-periodic, diffusively spectrally stable traveling wave solutions
of \eqref{e:ks} when subjected to subharmonic perturbations.
 First, in Section \ref{S:fb} we will see that
the spectrum of $\L$ acting on $L^2_{\rm per}(0,NT)$ is given by the union of the (necessarily discrete) spectrum of the corresponding 
Bloch operators $\L_\xi$ acting in $L^2_{\rm per}(0,T)$ for the discrete (finite) subset of $\xi\in[-\pi/T,\pi/T)$ such that $e^{i\xi NT}=1$.  
Thus, such a traveling wave $\bar{u}$ is necessarily spectrally stable to all subharmonic perturbations. 
In particular, for each $N\in\NM$  there exists a constant $\delta_N>0$ such that 
\[
\Re\left(\sigma_{L^2_N}\left(\L\right)\setminus\{0\}\right)\leq-\delta_N.
\]
Since $\L$ is clearly sectorial, it is easy to show that for $N\in\NM$ fixed and $\delta\in(0,\delta_N)$
there exists a constant $C_\delta>0$ such that
\begin{equation}\label{e:exp_lin_est}
\left\|e^{\L t}\left(1-\mathcal{P}_N\right)f\right\|_{L^2_N}\leq Ce^{-\delta t}\|f\|_{L^2_N}
\end{equation}
for all $f\in L^2_N$, where here $\mathcal{P}_N$ denotes the projection of $L^2_N$ onto the $NT$-periodic generalized kernel 
of $\L$.  Equipped with this linear estimate and exploiting the Galilean invariance \eqref{galilean}, 
the following nonlinear stability result was established in \cite{SS11,SS11_2} for the case $N=1$ and is easily extended to general $N\in\NM$.

\begin{proposition}[Nonlinear Subharmonic Stability \cite{SS11,SS11_2}]\label{p:sub_fixedN}
Let $\bar{u}\in H^1_{\rm loc}$ be a $T$-periodic traveling wave solution of \eqref{e:ks} with wave speed $c$.  Assume that $\bar{u}$ is 
diffusively spectrally stable and additionally satisfies Assumption \ref{a:non-deg}.  Fix $N\in\NM$ and take $\delta_N>0$ such that
\[
\max\left(\Re\left(\sigma_{L^2_{\rm per}(0,NT)}\left(\L\right)\right)\setminus\{0\}\right)=-\delta_N
\]
holds.  Then for each $N\in\NM$ and for every $\delta\in(0,\delta_N)$, there exists an $\eps=\eps_\delta>0$ and a constant $C=C_\delta>0$
such that whenever $u_0\in H^1_{\rm per}(0,NT)$ and $\|u_0-\bar{u}\|_{H^1(0,NT)}<\eps$, then the solution $u$ of \eqref{e:ks} with initial
data $u(0)=u_0$ exists globally in time and satisfies
\[
\left\|u(\cdot+\gamma_\infty,t) - \bar{u}\left(\cdot-(\delta M)t\right)-\delta M\right\|_{H^1(0,NT)}\leq C e^{-\delta t}\left\|u_0-\bar{u}\right\|_{H^1(0,NT)}
\]
for all $t>0$, where here $\gamma_\infty=\gamma_\infty(N)$ is some constant and
\[
\delta M:=\frac{1}{NT}\int_0^{NT}\left(u_0(z)-\bar{u}(z)\right)dz.
\]
\end{proposition}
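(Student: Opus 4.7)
The plan is to exploit the two symmetries of \eqref{e:ks}, Galilean invariance \eqref{galilean} and spatial translation, to cancel the two neutral directions of $\L$ on $L^2_{\rm per}(0,NT)$ identified in \eqref{ker1}, namely the eigenvector $\bar u_x$ and the Jordan-chain element $1$. Once both neutral modes are absorbed, the exponential semigroup bound \eqref{e:exp_lin_est} on $\Range(1-\mathcal{P}_N)$ closes a standard Duhamel iteration.

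First, I would absorb the constant-mode ($1$) direction directly via Galilean invariance. Setting $\tilde u(x,t):=u(x+(\delta M)t,t)-\delta M$, the function $\tilde u$ is, by \eqref{galilean}, another solution of \eqref{e:ks}, with initial data $\tilde u_0=u_0-\delta M$ of the same mean as $\bar u$. Since \eqref{e:ks} conserves mean and spatial translations preserve mean, any modulated perturbation of the form $v(x,t):=\tilde u(x+\gamma(t),t)-\bar u(x)$ automatically satisfies $\int_0^{NT}v(x,t)\,dx=0$, which is precisely the orthogonality that removes the Jordan-chain direction. After shifting $x\mapsto x-(\delta M)t$ inside the norm (which is translation-invariant on $L^2_{\rm per}(0,NT)$), the claimed estimate reduces to
\[
\|v(\cdot,t)\|_{H^1(0,NT)}\leq C e^{-\delta t}\|u_0-\bar u\|_{H^1(0,NT)},
\]
with $\gamma_\infty:=\lim_{t\to\infty}\gamma(t)$ absorbed into the shift.

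Next, I would pin down the remaining neutral direction by requiring $\langle\tilde\psi,v(\cdot,t)\rangle\equiv 0$, where $\tilde\psi$ is the left generalized eigenfunction of $\L$ biorthogonal to $\bar u_x$. Substituting the ansatz into \eqref{e:ks} yields the perturbation equation
\[
v_t=\L v+\dot\gamma\,(\bar u_x+v_x)-vv_x,
\]
and differentiating the orthogonality condition in $t$, together with the fact that $\L^\dagger\tilde\psi$ pairs trivially against the already mean-zero $v$, produces a scalar ODE
\[
\dot\gamma(t)=\frac{\langle\tilde\psi,\,vv_x\rangle}{1+\langle\tilde\psi,\,v_x\rangle},
\]
uniquely solvable by the implicit function theorem whenever $\|v\|_{H^1}$ is sufficiently small. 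With $\gamma$ so chosen, $v(\cdot,t)$ lies in $\Range(1-\mathcal{P}_N)$ throughout the existence interval.

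The final step is a standard Duhamel/bootstrap argument. Applying $(1-\mathcal{P}_N)$ and integrating,
\[
v(t)=e^{\L t}(1-\mathcal{P}_N)v(0)+\int_0^t e^{\L(t-s)}(1-\mathcal{P}_N)\bigl[\dot\gamma v_x-vv_x\bigr](s)\,ds,
\]
the linear term is controlled by \eqref{e:exp_lin_est}. The nonlinear term is controlled by combining the parabolic smoothing estimate
\[
\|e^{\L(t-s)}(1-\mathcal{P}_N)\partial_x f\|_{H^1}\leq C\bigl(1+(t-s)^{-1/2}\bigr)e^{-\delta(t-s)}\|f\|_{L^2},
\]
which is available since $\L$ is sectorial with leading symbol $-\delta\partial_x^4$, with the one-dimensional Sobolev bound $\|v^2\|_{L^2}\lesssim\|v\|_{H^1}^2$. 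A Gronwall bootstrap then yields $\|v(t)\|_{H^1}\leq C e^{-\delta t}\|v(0)\|_{H^1}$, and the resulting bound $|\dot\gamma(t)|\leq Ce^{-2\delta t}\|v(0)\|_{H^1}^2$ integrates to give exponential convergence $\gamma(t)\to\gamma_\infty$. The principal obstacle throughout is the single derivative in $vv_x$, which is not controlled by the $H^1$ norm of $v$; the parabolic smoothing of the sectorial semigroup, whose mild $(t-s)^{-1/2}$ singularity at $s=t$ is integrable and dominated by $e^{-\delta(t-s)}$, is precisely what is needed to overcome it.
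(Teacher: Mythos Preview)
Your proposal is correct and follows exactly the strategy the paper attributes to \cite{SS11,SS11_2}; note that the paper does not actually prove Proposition~\ref{p:sub_fixedN} but only cites it, remarking that it follows from the exponential bound \eqref{e:exp_lin_est} combined with the Galilean invariance \eqref{galilean}. Your sketch supplies precisely those missing details---absorbing the Jordan direction $1$ via a Galilean boost (so that $\int v=0$), modulating out $\bar u_x$ through a time-dependent shift $\gamma(t)$ fixed by $\langle\psi^{\rm adj},v\rangle\equiv 0$, and then closing a Duhamel iteration on $\Range(1-\mathcal P_N)$ using sectorial smoothing to handle the derivative in the nonlinearity---so there is nothing substantive to compare.
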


The above result establishes the nonlinear asymptotic stability of the Galilean family associated to $\bar{u}$, showing that nearby $NT$-periodic solutions will, up to a spatial translation,
asymptotically converge to a member of the Galilean family of $\bar{u}$.  However, Proposition \ref{p:sub_fixedN} lacks uniformity in $N$ in two important ways.
Specifically, both the exponential rate of decay $\delta$ and the allowable size of initial perturbations $\eps=\eps_\delta$ are controlled by the size of the
spectral gap $\delta_N>0$.  Since it is known that $\delta_N\to 0$ as $N\to\infty$, it follows that both $\delta$ and $\eps$ in Proposition \ref{p:sub_fixedN} must
necessarily tend to zero as $N\to\infty$.  Of course, from a practical level it would be preferable to develop a theory which, for a given background wave $\bar{u}$,
yielded a fixed size for the theoretically prescribed domain of attraction as well as a uniform (in $N$) rate of decay of initial perturbations.
The fact that this is possible is precisely our main result.

\begin{theorem}[Uniform Subharmonic Asymptotic Stability]\label{T:main}
Let $\bar{u}\in H^1_{\rm loc}$ be a $T$-periodic traveling wave solution of \eqref{e:ks} with wave speed $c$.  Assume that $\bar{u}$ is 
diffusively spectrally stable and, additionally, satisfies Assumption \ref{a:non-deg}. There exists an $\eps>0$ and a constant $C>0$ 
such that, for every $N\in\NM$, whenever $u_0\in L^1_{\rm per}(0,NT)\cap H^5_{\rm per}(0,NT)$ and
\[
E_0:=\left\|u_0-\bar{u}\right\|_{L^1_{\rm per}(0,NT)\cap H^5_{\rm per}(0,NT)}<\eps,
\]
there exists a function $\widetilde{\psi}(x,t)$ satisfying $\widetilde{\psi}(\cdot,0)\equiv 0$ such that the solution $u$ of \eqref{e:ks} with
initial data $u(0)=u_0$ exists globally in time and satisfies
\begin{equation}\label{e:stab_estimate} 
\left\|u\left(\cdot-\widetilde{\psi}(\cdot,t),t\right)-\bar{u}\left(\cdot-(\delta M)t\right)-\delta M\right\|_{H^5_{\rm per}(0,NT)},
	~~\left\|\nabla_{x,t}\widetilde{\psi}(\cdot,t)\right\|_{H^5_{\rm per}(0,NT)}\leq CE_0(1+t)^{-1/4}
\end{equation}
for all $t\geq 0$.  
\end{theorem}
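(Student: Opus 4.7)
The plan is to adapt the whole-line diffusive stability framework---Bloch decomposition combined with a modulation ansatz designed to absorb the slowly-decaying critical modes---to the subharmonic setting, carrying out every estimate uniformly in $N$. Invoking the Galilean symmetry \eqref{galilean}, the profile $\bar{u}(\cdot-(\delta M)t)+\delta M$ is itself an exact solution of \eqref{e:ks} with the same mass as $u_0$, so I would introduce the modulated perturbation
\begin{equation*}
v(x,t) := u(x-\widetilde{\psi}(x,t),t)-\bar{u}(x-(\delta M)t)-\delta M,
\end{equation*}
with $v(\cdot,0)=u_0-\bar{u}$, and derive, following the authors' reaction-diffusion methodology, a quasilinear PDE of the form $v_t = \mathcal{L}v + \mathcal{N}(v,\nabla_{x,t}\widetilde{\psi})$, where $\mathcal{N}$ collects quadratic-and-higher nonlinearities in $v$ together with all terms generated by the $\widetilde{\psi}$-substitution.

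For the linear theory, any $NT$-periodic function has a discrete Bloch representation $f(x) = \sum_{l=0}^{N-1} e^{i\xi_l x}\check{f}(\xi_l,x)$, with $\xi_l$ ranging over an $N$-point lattice in $[-\pi/T,\pi/T)$ and $\check{f}(\xi_l,\cdot)$ being $T$-periodic, and $e^{\mathcal{L}t}$ acts fiberwise as $e^{\mathcal{L}_{\xi_l}t}$ on each mode. Using (D1)--(D3) together with Assumption \ref{a:non-deg}, I would decompose $e^{\mathcal{L}t} = S^p(t) + \widetilde{S}(t)$, with $S^p(t)$ collecting the two analytic branches $\lambda_j(\xi) = -ia_j\xi + O(\xi^2)$ satisfying $\Re\lambda_j(\xi) \lesssim -\theta\xi^2$, and $\widetilde{S}(t)$ decaying exponentially at a rate independent of $N$. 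The modulation $\widetilde{\psi}$ would then be pinned implicitly (via an algebraic-Duhamel equation) so as to cancel the projection of $v$ onto the translation direction $\bar{u}_x$ that appears in the Jordan structure \eqref{ker1}; the Galilean shift $\delta M$ in the target profile absorbs the companion mass direction.

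The key quantitative estimate is
\begin{equation*}
\left\|S^p(t)f\right\|_{H^5_{\rm per}(0,NT)} \leq C(1+t)^{-1/4}\|f\|_{L^1_{\rm per}(0,NT)},
\end{equation*}
with $C$ independent of $N$. Bloch inversion writes $S^p(t)f$ as a sum over the discrete lattice of spectral projections weighted by $e^{\lambda_j(\xi_l)t}$; the Gaussian envelope from (D2) combined with a discrete Hausdorff-Young bound---whose constant is $N$-independent because the Bloch coefficients of an $L^1_{\rm per}(0,NT)$ function inherit a $1/N$ normalization exactly compensating the $N$ lattice points---yields this rate, mirroring the continuous bound $\int_{-\pi/T}^{\pi/T} e^{-\theta\xi^2 t}\,d\xi \sim t^{-1/2}$ of the whole-line problem. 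Coupled with the exponential decay of $\widetilde{S}(t)$, I would close a bootstrap on
\begin{equation*}
\zeta(t) := \sup_{0\leq s\leq t}(1+s)^{1/4}\left(\|v(\cdot,s)\|_{H^5_{\rm per}(0,NT)} + \|\nabla_{x,t}\widetilde{\psi}(\cdot,s)\|_{H^5_{\rm per}(0,NT)}\right)
\end{equation*}
of the form $\zeta(t) \leq C(E_0 + \zeta(t)^2)$, producing global existence and the estimate \eqref{e:stab_estimate}.

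The main obstacle is establishing the critical-mode bound uniformly in $N$. The naive route through Proposition \ref{p:sub_fixedN} fails precisely because its spectral gap $\delta_N\downarrow 0$; uniformity must instead come from treating the discrete Bloch sum as a Riemann-type approximation of the continuous Bloch integral governing the whole-line problem. Making this rigorous requires (i) a discrete Parseval / Hausdorff-Young estimate on $(0,NT)$ whose constants do not depend on $N$, (ii) careful bookkeeping of the Jordan block at $\xi=0$ stemming from $\mathcal{L}(1)=-\bar{u}_x$ when projecting onto the critical subspace, and (iii) uniform-in-$\xi$ control of the analytic branches $\lambda_j$ and their associated Bloch eigenprojections as the lattice is refined. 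These are the subharmonic analogs of steps already developed in the authors' reaction-diffusion framework, but must be re-verified in the fourth-order, conservative KdV/KS setting, where the Galilean symmetry plays the essential role of converting mass conservation into the free boost parameter $\delta M$, effectively removing one of the two critical directions from the stability analysis.
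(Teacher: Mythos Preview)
Your overall strategy coincides with the paper's: discrete Bloch decomposition, Galilean correction via $\delta M$, modulation $\widetilde{\psi}$ absorbing the translational mode, and a quadratic bootstrap on $\zeta(t)$. Two genuine gaps, however, would prevent the argument from closing as written.

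\textbf{The stated critical-mode estimate is false.} Your ``key quantitative estimate'' $\|S^p(t)f\|_{H^5_N}\leq C(1+t)^{-1/4}\|f\|_{L^1_N}$ does not hold, even for mean-zero $f$. The critical projection carries a $1/(i\xi)$ singularity coming from the Jordan normalization $\langle\tilde\phi_j,\phi_k\rangle=i\xi\delta_{jk}$; since $\phi_j(\xi)=\beta_2^{(j)}(\xi)\bar u'+\mathcal{O}(\xi)$, one has $e^{\L_\xi t}\Pi(\xi)=\bar u'\cdot[\text{scalar of order }\xi^{-1}]+\mathcal{O}(1)$. Summing the $\xi^{-1}$ piece over the lattice produces a discrete error-function profile that is merely bounded in $L^\infty_N$ and in fact \emph{grows} like $(1+t)^{1/4}$ in $L^2_N$---this is precisely the non-decaying phase $s_{p,N}(t)$ that must be absorbed into $\widetilde{\psi}$. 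Only the $\mathcal{O}(\xi)$ remainder $\widetilde{S}_{c,N}(t)$ enjoys the $(1+t)^{-1/4}$ decay. Saying that $\widetilde{\psi}$ ``cancels the projection onto $\bar u_x$'' is not enough here: the paper's essential step is the explicit splitting $S_{c,N}=\bar u'\,s_{p,N}+\widetilde{S}_{c,N}$ at the level of the Bloch symbols, together with separate estimates on $\partial_x^\ell\partial_t^m s_{p,N}$ (which decay only when $\ell+m\geq 1$) and on $\widetilde{S}_{c,N}$.

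\textbf{Nonlinear damping is missing.} Your bootstrap asks for $\|v\|_{H^5_N}$ directly, but the Duhamel representation for $v$ loses derivatives: the exponential piece of $\widetilde{S}_N(t)$ costs $H^{\ell+4m}$ regularity, and the nonlinearity $\mathcal{N}$ already contains up to three derivatives of $v$. The paper closes this by first controlling only $\|v\|_{L^2_N}$ via Duhamel and then invoking a nonlinear damping estimate (an $H^5$ energy inequality exploiting that the fourth-order dissipative term is top order) to slave $\|v(t)\|_{H^5_N}$ to $\|v(0)\|_{H^5_N}$, $\|v(s)\|_{L^2_N}$, and Sobolev norms of $(\psi_x,\psi_t,\gamma_t)$. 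Without this step the $H^5$ iteration cannot be closed.
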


The main difficulty in establishing the proof of Theorem \ref{T:main} appears at the linear level.  Specifically, one must develop a strategy
to handle the accumulation of $NT$-periodic eigenvalues near $\lambda=0$ for $N\gg 1$ in a uniform way.  
In the proof of Proposition \ref{p:sub_fixedN}, this accumulation is handled by enclosing the origin in the spectral plane
in an small ball $B(0,r_N)$ where the $N$-dependent radius is chosen so that
\begin{equation}\label{e:exp_spec_decomp}
\sigma_{L^2_{\rm per}(0,NT)}\left(\L\right)\cap B(0,r_N)=\{0\}
\end{equation}
and defining the associate Riesz spectral projection
\begin{equation}\label{e:riesz}
\mathcal{P}_N=\frac{1}{2\pi i}\int_{\partial B(0,r_N)}\frac{dz}{z-\L}
\end{equation}
onto the $NT$-periodic generalized kernel of $\L$.  One then decomposes the linear solution operator (semigroup) as
\[
e^{\L t} = e^{\L t}\mathcal{P}_N + e^{\L t}\left(1-\mathcal{P}_N\right)
\]
and uses the exponential bound \eqref{e:exp_lin_est} to establish the result.  The lack of uniformity in Proposition \ref{p:sub_fixedN}, however, stems from the fact
that the radius $r_N$ of the ball used in the Riesz projection \eqref{e:riesz} must necessarily tend to zero as $N\to\infty$ in order to maintain the
spectral decomposition \eqref{e:exp_spec_decomp}.  

To establish the uniformity in Theorem \ref{T:main}, however, we cannot allow the radius of the ball about the origin to shrink.  
Instead, we define a ball $B(0,r)$ \emph{with an $N$-independent radius} and define the associated
Riesz spectral projection associated to the total generalized eigenspace  corresponding to eigenvalues interior to the ball.  
Naturally, the dimension of this total eigenspace is tending
to infinity as $N\to\infty$, and we must work to establish uniform in $N$ decay estimates associated with the induced decomposition of the semigroup.
This methodology
was first introduced, at the linear level, by the authors and collaborators in \cite{HJP} and was further extended, in the context of reaction-diffusion
systems, to the full nonlinear level in \cite{JP21}, and is closely modeled off of the known stability theory for localized perturbations \cite{JNRZ_Invent}.
In the present work, the additional complication (compared to these previous subharmonic works) is the presence of a non-trivial Jordan block \eqref{ker1}
which, in turn, yields slower uniform decay rates of the associated linear semigroups as compared to those rates in the reaction-diffusion context.

Finally, we make a few remarks concerning the connection between Proposition \ref{p:sub_fixedN} and Theorem \ref{T:main}.  
The analysis in Section \ref{S:nonlinear} shows that the modulation function $\widetilde{\psi}$ in Theorem \ref{T:main} can be decomposed as
\[
\widetilde{\psi}(x,t) = \frac{1}{N}\gamma(t)+\psi(x,t)
\]
where here $\psi(\cdot,t)\in W^{5,\infty}_N$ satisfies
\begin{equation}\label{e:stab_bd}
\left\|\psi(\cdot,t)\right\|_{L^\infty_N}\leq CE_0
\end{equation}
for some constant $C>0$ independent of $N$.  Further, while our methods fail to directly yield convergence\footnote{This is due to the presence of a non-trivial Jordan block
associated with the co-periodic Bloch opertaor $\L_0$, leading to slower linear decay rates in Section \ref{S:lin} when compared to, for example, the reaction-diffusion context
where no such Jordan block is present (see \cite{JP21} for details).} of $\gamma(t)$ as $t\to\infty$, one immediate observation is that
for each fixed $N\in\NM$ and $NT$-periodic solution $u(x,t)$ of \eqref{e:ks} with initial data sufficiently close (in $L^1_N\cap H^5_N$) to $\bar{u}$, then using
the notation from both Proposition \ref{p:sub_fixedN} and Theorem \ref{T:main} we have
\[
\left\|u(\cdot+\gamma_\infty,t)-u(x-\widetilde{\psi}(\cdot,t),t)\right\|_{L^2_N}\leq C_N(1+t)^{-1/4}
\]
for some ($N$-dependent) constant $C_N>0$.  This leads one to expect that, under relatively generic circumstances,  for each fixed $N$ one should have
\[
\frac{1}{N}\gamma(t)+\psi(x,t)\to \gamma_\infty
\]
as $t\to\infty$.  Understanding this convergence rigorously is an interesting remaining problem from our analysis.

\

The outline of this paper is as follows.  In Section \ref{S:prelim} we briefly review the application of Floquet-Bloch theory
to subharmonic perturbations and collect several properties of the Bloch operators and their associated semigroups.  We further
establish basic ``high-frequency" decay properties of the Bloch semigroups that arise as consequences of the diffusive spectral stability assumption.
In Section \ref{S:lin} we provide a delicate decomposition of the semigroup $e^{\L t}$ acting on $L^2_{\rm per}(0,NT)$ which will yield
polynomial  decay rates on the semigroup which are \emph{uniform in $N$}: see Proposition \ref{p:lin_est}.  This linear decomposition
then motivates in Section \ref{S:nonlinear} a nonlinear decomposition of a small $L^2_{\rm per}(0,NT)$ neighborhood of the underlying diffusively stable $T$-periodic
background wave $\bar{u}$.  Equipped with this nonlinear decomposition, along with the linear estimates from Section \ref{S:lin}, we apply a nonlinear iteration scheme
to corresponding system of perturbation equations, yielding a proof of our main result Theorem \ref{T:main} above.

%

\

\noindent
{\bf Acknowledgments:}  The work of MAJ was partially funded by the NSF under grant DMS-2108749, as well as the Simons Foundation Collaboration grant number 714021.

\section{Preliminaries}\label{S:prelim}

Here, we review several preliminary analytical results.  We begin with a brief review of the application of Floquet-Bloch theory to subharmonic perturbations, and then
use this to establish some elementary semigroup estimates for the associated Bloch operators.  For notational convenience, for each $N\in\NM$, $p\geq 1$, and fixed period $T>0$
we introduce the notation
\[
L^p_N:=L^p_{\rm per}(0,NT)
\]
and similarly with all associated Sobolev spaces.

\subsection{Floquet-Bloch Theory for Subharmonic Perturbations}\label{S:fb}

We begin by reviewing the results of Floquet-Bloch theory when applied to subharmonic perturbations.  For more details, see the recent works \cite{HJP,JP21}.

Suppose that $\bar{u}$ is a $T$-periodic traveling wave solution of \eqref{e:ks} with wave speed $c\in\RM$, and consider the associated linearized operator
$\L$.  For each fixed $N\in\NM$, define\footnote{Observe that $\Omega_N$ is always a finite set with $|\Omega_N|=N$ and with the distance between any two closest
elements being $2\pi/NT$.}
\[
\Omega_N:=\left\{\xi\in[-\pi/T,\pi/T):e^{i\xi NT}=1\right\},
\]
and note that, since the coefficients of $\L$ are $T$-periodic, basic results in Floquet-theory implies that any $NT$-periodic solution of the ordinary differential equation
\[
\L v=\lambda v
\]
must be of the form
\[
v(x) = e^{i\xi xT} w(x)
\]
for some $\xi\in\Omega_N$ and $w\in L^2_{\rm per}(0,T)$.  More specifically, one can show that $\lambda\in\CM$ is an $NT$-periodic 
eigenvalue of $\L$ if and only if there exists a $\xi\in\Omega_N$ such that the problem
\[
\lambda w= e^{-i\xi x}\L e^{i\xi x}w =:\L_\xi w,
\]
admits a non-trivial solution in $L^2_{\rm per}(0,T)$, where here the operators $\L_\xi$ are known as the Bloch operators associated to $\L$ 
and the parameter $\xi$ is referred to as the Bloch frequency: see also Definition \ref{d:spec_stab}.  In fact, we have the spectral decomposition
\[
\sigma_{L^2_N}\left(\L\right) = \bigcup_{\xi\in\Omega_N}\sigma_{L^2_{\rm per}(0,T)}\left(\L_\xi\right),
\]
which characterizes the $N$-periodic spectrum of $\L$ in terms of the union of a $T$-periodic eigenvalues for the $1$-parameter family of Bloch operators $\{\L_\xi\}_{\xi\in\Omega_N}$.

From above, it is natural when studying such $NT$-periodic spectral problems to desire to decompose arbitrary functions as superpositions of functions
of the form $e^{i\xi \cdot}w(\cdot)$ with $w\in L^2_{\rm per}(0,T)$.  To this end, given a function $g\in L^2_N$ we define the $T$-periodic Bloch transform
of $g$ as
\[
\mathcal{B}_T(g)(\xi,x):=\sum_{\ell\in\ZM}e^{2\pi i\ell x/T}\hat{g}\left(\xi+2\pi\ell/T\right),~~\xi\in\Omega_N,~~x\in\RM
\]
where now $\hat{g}$ denotes the Fourier transform of $g$ on the torus given by
\begin{equation}\label{e:Fourier}
\hat{g}(z):=\int_{-NT/2}^{NT/2} e^{-izy}g(y)dy.
\end{equation}
In particular, observe that for any $\xi\in\Omega_N$ and $g\in L^2_N$, the function $\mathcal{B}_T(g)(\xi,\cdot)$ is $T$-periodic and, furthermore, the function
$g$ can be recovered via the inverse Bloch representation formula
\[
g(x)=\frac{1}{NT}\sum_{\xi\in\Omega_N} e^{i\xi x}\mathcal{B}_T(g)(\xi,x).
\]
One can check that the $T$-periodic Bloch transform
\[
\mathcal{B}_T:L^2_N\mapsto \ell^2\left(\Omega_N, L^2_{\rm per}(0,T)\right)
\]
as defined above satisfies the subharmonic Parseval identify
\begin{equation}\label{e:parseval}
\left<f,g\right>_{L^2_N}=\frac{1}{NT^2}\sum_{\xi\in\Omega_N}\left<\mathcal{B}_T(f)(\xi,\cdot),\mathcal{B}_T(g)(\xi,\cdot)\right>_{L^2(0,T)}
\end{equation}
valid for all $f,g\in L^2_N$.  Finally, we note that if $f\in L^2_{\rm per}(0,T)$ and $g\in L^2_{\rm per}(0,NT)$ then
\[
\mathcal{B}_T(fg)(\xi,x) = f(x)\mathcal{B}_T(g)(\xi,x)
\]
and, additionally, we have the identity
\begin{equation}\label{e:useful_identity}
\left<f,g\right>_{L^2(0,NT)} = \frac{1}{T}\left<f,\mathcal{B}_T(g)(0,\cdot)\right>_{L^2(0,T)}.
\end{equation}
For proofs of \eqref{e:parseval} and \eqref{e:useful_identity}, see \cite[Section 2]{HJP}.

With the above functional analytic tools in hand, we readily see that for our given linearized operator $\L$ and $v\in L^2_N$ we have
\[
\mathcal{B}_T\left(\L v\right)(\xi,x)=\L_\xi\mathcal{B}_T(v)(\xi,x)
\]
so that the operators $\L_\xi$ may be viewed as operator-valued symbols under $\mathcal{B}_T$ acting on $L^2_N$.  Similarly, since the operators $\L$ and $\L_\xi$ are clearly
sectorial on their respective domains, they clearly generate analytic semigroups on $L^2_N$ and $L^2_{\rm per}(0,T)$, respectively, and further their associated semigroups
satisfy
\begin{equation}\label{e:bloch_soln}
\mathcal{B}_T\left(e^{\L t}v\right)(\xi,x)=\left(e^{\L_\xi t}\mathcal{B}_T(v)(\xi,\cdot)\right)(x)~~{\rm and}~~
	e^{\L t}v(x) = \frac{1}{NT}\sum_{\xi\in\Omega_N} e^{i\xi x}e^{\L_\xi t}\mathcal{B}(v)(\xi,x).
\end{equation}
In particular, we see that the $T$-periodic Bloch transform $\mathcal{B}_T$ diagonalizes the periodic coefficient operator differential $\L$ acting on $L^2_N$ in the same way
that the Fourier transform diagonalizes constant-coefficient differential operators acting on $L^2(\RM)$.  This decomposition formula will be used
heavily in our forthcoming linear stability estimates.

\subsection{Diffusive Spectral Stability \& Basic Properties of Bloch Semigroups}

With the above preliminaries, we now establish some important immediate consequences of the diffusive spectral stability assumption. 
As a first result, we describe the unfolding of the Jordan block \eqref{ker1} for the Bloch operators $\L_\xi$ for $|\xi|\ll 1$.  This result
for the KdV/KS system \eqref{e:ks} was established in \cite[Section 3.1]{BJNRZ13}.  See also \cite{JNRZ_Invent} for a more general
version of the same result.

\begin{lemma}[Spectral Preparation]\label{l:spec_prep}
Suppose that $\bar{u}$ is a $T$-periodic traveling wave solution of \eqref{e:ks} which is diffusively spectrally stable and, additionally, satisfies
Assumption \ref{a:non-deg}.  Then the following properties hold.
\begin{itemize}
\item[(i)] For any fixed $\xi_0\in(0,\pi/T)$ there exists a constant $\delta_0>0$ such that
\[
\Re\left(\sigma\left(\L_\xi\right)\right)<-\delta_0
\]
for all $\xi\in[-\pi/T,\pi/T)$ with $|\xi|>\xi_0$.
\item[(ii)] There exist positive constants $\xi_1$ and $\delta_1$ such that for all $|\xi|<\xi_1$, the spectrum of $\L_\xi$ decomposes into two disjoint subsets
\[
\sigma\left(\L_\xi\right)=\sigma_-\left(\L_\xi\right)\bigcup\sigma_0\left(\L_\xi\right)
\]
with the following properties:
\begin{itemize}
\item[(a)] $\Re\left(\sigma_-\left(\L_\xi\right)\right)<-\delta_1$ and $\Re\left(\sigma_c\left(\L_\xi\right)\right)>-\delta_1$.
\item[(b)] The set $\sigma_0\left(\L_\xi\right)$ consists of precisely two eigenvalues, which are analytic in $\xi$ and expand as
\[
\lambda_j(\xi)=-i\xi a_j-d_j\xi^2+\mathcal{O}(\xi^3)~~~j=1,2
\]
for $|\xi|\ll 1$ and some constants $a_j\in\RM$ (distinct) and $d_j>0$.
\item[(c)] The left and right $T$-periodic eigenfunctions $\phi_j(\xi)$ and $\tilde{\phi}(\xi)$ of $\L_\xi$ associated with $\lambda_j(\xi)$ above, normalized so that
\[
\left<\tilde\phi_j(\xi),\phi_k(\xi)\right>_{L^2(0,1)}=i\xi \delta^j_k,~~1\leq j,k\leq 2,
\]
are given as
\begin{align*}
\phi_j(\xi)&=(i\xi)\beta_1^{(j)}(\xi)q_1(\xi)+\beta_{2}^{(j)}(\xi)q_{2}(\xi)\\
\tilde{\phi}_j(\xi)&=\tilde\beta_1^{(j)}(\xi)\tilde q_1(\xi)+(i\xi)\tilde\beta_{2}^{(j)}(\xi)\tilde q_{2}(\xi)
\end{align*}
where the functions $q_j,\tilde{q}_j:[-\xi_0,\xi_0]\to L^2_{\rm per}(0,T)$ are analytic functions such that $\{q_j(\xi,\cdot)\}_{j=1,2}$ and $\{\tilde q_j(\xi,\cdot)\}_{j=1,2}$ are dual
bases of the total eigenspace of $\L_\xi$ associated with the spectrum $\sigma_0(\L_\xi)$ chosen to satisfy
\[
\left(\begin{array}{c}q_1(0,\cdot)\\ q_2(0,\cdot)\end{array}\right) = \left(\begin{array}{c}1\\\bar{u}'\end{array}\right)~~{\rm and}~~
\left(\begin{array}{c}\tilde q_1(0,\cdot)\\ \tilde q_2(0,\cdot)\end{array}\right) = \left(\begin{array}{c}1\\ \psi^{\rm adj}\end{array}\right)
\]
where $\psi^{adj}\in L^2_{\rm per}(0,T)$ is a generalized left eigenfunction satisfying $\L^\dag\psi^{adj}=1$ as well as
\[
\left<\psi^{adj},1\right>_{L^2_1}=0{~~\rm and}~~\left<\psi^{adj},\bar{u}'\right>_{L^2_1}=1,
\]
and where the $\beta_{j},\tilde\beta_j:[-\xi_0,\xi_0]\to\CM$ are analytic functions.
\end{itemize}
\end{itemize}
\end{lemma}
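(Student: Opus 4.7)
The plan is to dispatch part (i) immediately from the diffusive spectral stability hypothesis. Restricted to the compact annulus $\{\xi \in [-\pi/T, \pi/T) : |\xi| \geq \xi_0\}$, the bound in hypothesis (D2) (in its $T$-periodic form) yields a uniform estimate $\Re(\sigma(\L_\xi)) \leq -\theta \xi_0^2$, and one simply sets $\delta_0 = \theta \xi_0^2/2$.

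The substance is part (ii). The starting point is that, by (D3) together with the identities \eqref{ker1}, the origin is an isolated eigenvalue of $\L_0$ of algebraic multiplicity two whose generalized eigenspace is spanned by $\bar{u}'$ (a genuine eigenvector) and the constant function $1$ (satisfying $\L_0(1) = -\bar{u}'$); in particular $\L_0$ has a genuine Jordan block of height two at the origin. Choosing a small contour $\Gamma$ that encloses the origin and no other spectrum of $\L_0$, upper-semicontinuity of separated spectrum under analytic perturbations (Kato) allows the use of $\Gamma$ to enclose exactly two eigenvalues of $\L_\xi$ (counted with multiplicity) for $|\xi|$ small, with an analytically varying Riesz projection $\Pi(\xi)$. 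The next step is to construct analytic dual bases $\{q_j(\xi)\}_{j=1,2}$ and $\{\tilde q_j(\xi)\}_{j=1,2}$ of the ranges of $\Pi(\xi)$ and $\Pi(\xi)^\dagger$, normalized so that $(q_1(0),q_2(0)) = (1,\bar{u}')$ and $(\tilde q_1(0),\tilde q_2(0)) = (1,\psi^{\mathrm{adj}})$; the existence of $\psi^{\mathrm{adj}}$ with the stated normalization is a routine Fredholm exercise using \eqref{ker1}. This representation reduces the spectral problem on $\Pi(\xi)\, L^2_{\rm per}(0,T)$ to a $2\times 2$ matrix $M(\xi)$, analytic in $\xi$, with $M(0)$ equal to the $2\times 2$ Jordan block $\bigl(\begin{smallmatrix} 0 & 1 \\ 0 & 0 \end{smallmatrix}\bigr)$.

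The next step is the desingularization of this perturbed Jordan block. A direct expansion of the characteristic polynomial of $M(\xi)$ yields $\lambda^2 + b_1(\xi)\lambda + b_0(\xi) = 0$ with $b_1(0) = b_0(0) = 0$, and computing the first-order $\xi$-corrections to $\Pi(\xi)$ one identifies $b_0(\xi) = -a_1 a_2 \xi^2 + \mathcal{O}(\xi^3)$ and $b_1(\xi) = i(a_1+a_2)\xi + \mathcal{O}(\xi^2)$ for real scalars $a_1, a_2$ (the characteristic speeds entering \eqref{e:crit_bif}). The discriminant of this quadratic is $(a_1-a_2)^2 \xi^2 + \mathcal{O}(\xi^3)$, so Assumption \ref{a:non-deg} allows its square root to be extracted analytically in $\xi$ and produces the two analytic branches $\lambda_j(\xi) = -i a_j \xi - d_j \xi^2 + \mathcal{O}(\xi^3)$. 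Strict positivity of $d_j$ is then forced by the envelope $\Re(\lambda_j(\xi)) \leq -\theta \xi^2$ coming from (D2), which rules out $d_j \leq 0$. Finally, the stated form of the eigenfunctions reflects the rescaling that accompanies the desingularization: the eigenvectors of $M(\xi)$ generically have one component of size $\mathcal{O}(1/\xi)$, and pulling a factor of $i\xi$ into the definitions of $\phi_j(\xi)$ and $\tilde\phi_j(\xi)$ both renders these eigenfunctions regular at $\xi=0$ and produces the $i\xi\,\delta^j_k$ on the right-hand side of the biorthogonality.

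The main obstacle is the Jordan block itself: a generic analytic perturbation of a $2\times 2$ Jordan block produces Puiseux (typically $\xi^{1/2}$) branches rather than analytic ones. Assumption \ref{a:non-deg} is precisely the non-degeneracy needed to avoid this, since distinctness of $a_1, a_2$ makes the square root of the discriminant extract cleanly. Because the KdV/KS operator fits the general framework of \cite{BJNRZ13, JNRZ_Invent}, adapting the spectral perturbation computations of those references will suffice, with only minor bookkeeping changes coming from the conservation-law Jordan structure encoded in \eqref{ker1}.
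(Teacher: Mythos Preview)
The paper does not give its own proof of this lemma; it is stated with the preface ``This result for the KdV/KS system \eqref{e:ks} was established in \cite[Section 3.1]{BJNRZ13}.  See also \cite{JNRZ_Invent} for a more general version of the same result,'' and no argument follows. Your sketch is therefore not competing with anything in the paper itself, but it does accurately reproduce the approach of those cited references: Kato perturbation of the isolated eigenvalue $\lambda=0$ of $\L_0$, construction of analytic dual bases for the range of the Riesz projection, reduction to a $2\times 2$ matrix with nilpotent leading part, and desingularization of the resulting perturbed Jordan block using Assumption~\ref{a:non-deg} to extract an analytic square root of the discriminant. Your identification of the role of Assumption~\ref{a:non-deg} in avoiding Puiseux branching is exactly right, as is the explanation of the $i\xi$ rescaling in part (c).

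Two small bookkeeping points. First, with the conventions $q_1(0)=1$, $q_2(0)=\bar u'$ and $\L_0(1)=-\bar u'$, $\L_0(\bar u')=0$, the matrix $M(0)$ is a nilpotent Jordan block but not literally $\bigl(\begin{smallmatrix}0&1\\0&0\end{smallmatrix}\bigr)$; the nonzero entry sits in the $(2,1)$ slot with a minus sign. Second, the discriminant of your quadratic is $b_1^2-4b_0 = -(a_1-a_2)^2\xi^2+\mathcal{O}(\xi^3)$, not $+(a_1-a_2)^2\xi^2$; the sign is irrelevant to the argument since one factors out $\xi^2$ and takes an analytic square root of the nonvanishing remainder, but it is worth getting right. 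Neither issue affects the validity of your outline.
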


Noting again that the Bloch operators  $\L_\xi$ are sectorial on $L^2_{\rm per}(0,T)$, the spectral results in Lemma \ref{l:spec_prep} immediately imply the following 	
elementary estimates for the Bloch semigroups $e^{\L_\xi t}$.

\begin{proposition}\label{p:semigrp_basicbds}
Suppose that $\bar{u}$ is a $T$-periodic traveling wave solution of \eqref{e:ks} which satisfies the hypothesis of Lemma \ref{l:spec_prep}.  Then the following
properties hold.
\begin{itemize}
\item[(i)] For any fixed $\xi_0\in(0,\pi/T)$, there exist positive constants $C_0$ and $d_0$ such that
\[
\left\|e^{\L_\xi t}f\right\|_{B\left(L^2_{\rm per}(0,T)\right)}\leq C_0 e^{-d_0 t}
\]
is valid for all $t\geq 0$ and all $\xi\in[-\pi/T,\pi/T)$ with $|\xi|>\xi_0$.

\item[(ii)] With $\xi_1$ chosen as in Lemma \ref{l:spec_prep}(ii), 
there exist positive constants $C_1$ and $d_1$ such that for any $|\xi|<\xi_1$, if $\Pi(\xi)$ denotes the (rank-two) spectral projection onto the 
generalized eigenspaces associated to the critical eigenvalues $\{\lambda_j(\xi)\}_{j=1,2}$, then
\[
\left\|e^{\L_\xi t}\left(1-\Pi(\xi)\right)\right\|_{B\left(L^2_{\rm per}(0,T)\right)}\leq C_1 e^{-d_1 t}
\]
is valid for all $t\geq 0$.
\end{itemize}
\end{proposition}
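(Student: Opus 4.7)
The plan is to deduce both bounds from the Dunford--Riesz representation of the semigroup
\[
e^{\L_\xi t}=\frac{1}{2\pi i}\int_\Gamma e^{zt}(z-\L_\xi)^{-1}\,dz,
\]
combined with uniform-in-$\xi$ resolvent bounds on an appropriate contour $\Gamma$. The operator $\L_\xi=e^{-i\xi x}\L e^{i\xi x}$ is a fourth-order differential operator on $L^2_{\rm per}(0,T)$ whose leading term is $-\delta(\partial_x+i\xi)^4$; at each $\xi$ it is therefore sectorial, and the family $\xi\mapsto\L_\xi$ is a polynomial in $\xi$ with bounded coefficients relative to the principal part, hence an analytic family of type (A) in the sense of Kato. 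This analytic structure, together with compactness of the parameter sets appearing below, is what will power the uniform estimates.

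For part (i), the set $K:=\{\xi\in[-\pi/T,\pi/T):|\xi|\geq\xi_0\}$ is compact on the Brillouin torus, and by Lemma \ref{l:spec_prep}(i), $\sigma(\L_\xi)\subset\{\Re z<-\delta_0\}$ uniformly on $K$. I would fix any $d_0\in(0,\delta_0)$ and select a Hankel-type contour $\Gamma_0$ lying in the half-plane $\{\Re z\leq -d_0\}$; by upper semi-continuity of the spectrum of the analytic family on the compact set $K$, such a contour can be chosen to lie to the right of $\sigma(\L_\xi)$ for every $\xi\in K$ simultaneously. The map $(\xi,z)\mapsto(z-\L_\xi)^{-1}$ is jointly continuous on $K\times\Gamma_0$, and sectoriality provides the decay needed to truncate $\Gamma_0$ at large radius, together yielding a uniform resolvent bound $\sup_{\xi\in K,\,z\in\Gamma_0}\|(z-\L_\xi)^{-1}\|\leq M$. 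Substituting into the Dunford representation then produces the asserted $C_0 e^{-d_0 t}$ estimate.

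For part (ii), fix $r\in(0,\delta_1/2)$ and, possibly shrinking $\xi_1$ from Lemma \ref{l:spec_prep}(ii), arrange so that $\sigma(\L_\xi)\cap\overline{B(0,r)}=\{\lambda_1(\xi),\lambda_2(\xi)\}$ and the remaining spectrum $\sigma_-(\L_\xi)$ lies in $\{\Re z<-\delta_1\}$ for every $|\xi|\leq\xi_1$. The rank-two Riesz projection
\[
\Pi(\xi)=\frac{1}{2\pi i}\oint_{|z|=r}(z-\L_\xi)^{-1}\,dz
\]
is then analytic and, on this closed $\xi$-set, uniformly bounded. On the range of $I-\Pi(\xi)$ the restricted operator inherits the spectrum $\sigma_-(\L_\xi)\subset\{\Re z<-\delta_1\}$, and applying the same Dunford representation to the restricted semigroup on a contour $\Gamma_1\subset\{\Re z\leq -d_1\}$ with $d_1\in(0,\delta_1)$ yields the claimed bound on $e^{\L_\xi t}(I-\Pi(\xi))$.

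\textbf{Main obstacle.} The only substantive point in either argument is verifying that the contour choice and associated resolvent bound can be arranged \emph{uniformly in $\xi$} on the relevant parameter set. This is a routine but somewhat technical consequence of Kato's analytic perturbation theory for the polynomial-in-$\xi$ family $\L_\xi$, combined with compactness; no new ideas beyond Lemma \ref{l:spec_prep} are required.
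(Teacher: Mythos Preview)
Your proposal is correct and is precisely the standard argument underlying what the paper states. The paper does not give a proof at all: immediately before the proposition it simply remarks that, since the Bloch operators $\L_\xi$ are sectorial on $L^2_{\rm per}(0,T)$, the spectral results in Lemma \ref{l:spec_prep} ``immediately imply'' these semigroup bounds. Your Dunford--Riesz contour argument, with uniformity in $\xi$ supplied by analytic dependence of the type-(A) family and compactness of the relevant parameter sets, is exactly the routine justification the paper is alluding to and then suppressing.
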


Note that Proposition \ref{p:semigrp_basicbds}(ii) is a natural extension to $|\xi|\ll 1$ of the linear bound \eqref{e:exp_lin_est}, established for $\xi=0$ which
was key in establishing the fixed-$N$ subharmonic stability result Proposition \ref{p:sub_fixedN}.

\section{Subharmonic Linear Estimates}\label{S:lin}

The goal of this section is to obtain decay estimates on the semigroup $e^{\L t}$ acting on $L^2_N$ which are uniform in $N$.  To this end,
we use \eqref{e:bloch_soln} to study the action of $e^{\L t}$ on $L^2_N$ in terms of the associated Bloch operators.  
Naturally, by Lemma \ref{l:spec_prep} we expect for each fixed $N\in\NM$ that the long-time behavior of $e^{\L t}$ is dominated by the generalized kernel of $\L$.
As discussed in the introduction, the main difficulty in obtaining such a result while maintaining uniformity in $N$ is the accumulation of $NT$-periodic
eigenvalues near the origin as $N\to\infty$.  This is overcome through a delicate decomposition of $e^{\L t}$, separating the action into appropriate
critical (i.e., corresponding to spectrum accumulating near the origin) and non-critical frequency components.  This methodology is heavily influenced by the corresponding decomposition
used in the case of localized perturbations of periodic waves: see, for example, \cite{JNRZ_Invent}.  Note, further, that while our basic strategy is
similar to the recent subharmonic analysis of reaction diffusion equations conducted in \cite{JP21}, the new challenge here is the fact that the
linearized operator $\L$ now has two spectral curves which pass through the origin, a reflection of the non-trivial Jordan structure in \eqref{ker1}.

To begin, let $\xi_1\in (0,\pi/T)$ be defined as in Lemma \ref{l:spec_prep} and let $\rho$ be a smooth cutoff function satisfying $\rho(\xi)=1$
for $|\xi|<\xi_1/2$ and $\rho(\xi)=0$ for $|\xi|>\xi_1$.  Given a function $v\in L^2_N$, we then use \eqref{e:bloch_soln} to
decompose the action of $e^{\L t}$ on $v$ into high-frequency and low-frequency components via
\begin{equation}\label{e:shf}
\left\{
\begin{aligned}
e^{Lt}v(x)&=\frac{1}{NT}\sum_{\xi\in\Omega_N}\rho(\xi)e^{i\xi x}e^{L_\xi t}\mathcal{B}_T(v)(\xi,x)
	+\frac{1}{NT}\sum_{\xi\in\Omega_N}(1-\rho(\xi))e^{i\xi x}e^{L_\xi t}\mathcal{B}_T(v)(\xi,x)\\
&=:S_{lf,N}(t)v(x)+S_{hf,N}(t)v(x).
\end{aligned}
\right.
\end{equation}
Using Proposition \ref{p:semigrp_basicbds}(i),  it follows  that there exist constants $C,\eta>0$, both independent of $N$, such that
\[
\max_{\xi\in\Omega_N}(1-\rho(\xi))\|e^{L_\xi t}\|_{B\left(L^2_1\right)}\leq Ce ^{-\eta t}
\]
which, by the subharmonic Parseval identity \eqref{e:parseval}, implies the exponential estimate 
\begin{align*}
\|S_{hf,N}(t)v\|_{L^2_N}^2&=\frac{1}{NT^2}\sum_{\xi\in\Omega_N}\left\|(1-\rho(\xi))e^{L_\xi t}\mathcal{B}_T(v)(\xi,\cdot)\right\|_{L^2_x(0,1)}^2\\
&\leq\frac{1}{NT^2}\sum_{\xi\in\Omega_N}(1-\rho(\xi))^2\|e^{L_\xi t}\|_{B\left(L^2_1\right)}^2\left\|\mathcal{B}_T(v)(\xi,\cdot)\right\|_{L^2_x(0,1)}^2\\
&\leq Ce^{-2\eta t}\left(\frac{1}{NT^2}\sum_{\xi\in\Omega_N}\left\|\mathcal{B}_T(v)(\xi,\cdot)\right\|_{L^2_x(0,1)}^2\right)\\
&=Ce^{-2\eta t}\|v\|_{L^2_N}^2,
\end{align*}
on the high-frequency component of the solution operator.

For the low-frequency component, define for each $|\xi|<\xi_0$ the rank-two spectral projection onto the critical modes of $\L_\xi$ by
\[
\left\{\begin{aligned}
&\Pi(\xi):L^2_1\to\bigoplus_{j=1}^{2}{\rm ker}\left(L_\xi-\lambda_j(\xi)I\right)\\
&\Pi(\xi)f=\sum_{j=1}^{2}q_j(\xi)\left<\tilde{q}_j(\xi,\cdot),f\right>_{L^2(0,1)}.
\end{aligned}
\right.
\]
Note we also have the alternate representation formula
\[
\Pi(\xi)f=\frac{1}{i\xi}\sum_{j=1}^{2}\phi_j(\xi)\left<\tilde\phi_j(\xi,\cdot),f\right>_{L^2(0,1)},
\]
which more explicitly demonstrates the singularity as $\xi\to 0$ of the eigenprojection.
Using this, the low-frequency operator $S_{lf,N}(t)$ can
be further decomposed into the contribution from the critical modes near $(\lambda,\xi)=(0,0)$ and the contribution from the
low-frequency spectrum bounded away from $\xi=0$ via
\begin{equation}\label{e:slf}
\left\{
\begin{aligned}
S_{lf,N}(t)v(x)&=\frac{1}{NT}\sum_{\xi\in\Omega_N}\rho(\xi)e^{i\xi x}e^{L_\xi t}\Pi(\xi)\mathcal{B}_T(v)(\xi,x)\\
&\quad\qquad+\frac{1}{NT}\sum_{\xi\in\Omega_N}\rho(\xi)e^{i\xi x}e^{L_\xi t}\left(1-\Pi(\xi)\right)\mathcal{B}_T(v)(\xi,x)\\
&=:S_{c,N}(t)v(x)+\widetilde{S}_{lf,N}(t)v(x).
\end{aligned}
\right.
\end{equation}
As above, by possibly choosing $\eta>0$ smaller, Proposition \ref{p:semigrp_basicbds}(ii) implies that there exists a constant $C>0$ such that for each $N\in\NM$ we have
\[
\left\|e^{L_\xi t}(1-\Pi(\xi))f\right\|_{L^2_1}\leq Ce^{-\eta t}\|f\|_{L^2_1}
\]
for each $f\in L^2_1$, and hence another application of Parseval's identity \eqref{e:parseval} yields
\[
\left\|\widetilde{S}_{lf,N}(t)v\right\|_{L^2_N}\leq Ce^{-\eta t}\|v\|_{L^2_N}.
\]

For the critical component, we decompose $S_{c,N}(t)$ further as
\begin{align*}
S_{c,N}(t)v(x)&=\frac{1}{NT}e^{L_0t}\Pi(0)\mathcal{B}_T(v)(0,x)\\
&\qquad+\frac{1}{NT}\sum_{\xi\in\Omega_N\setminus\{0\}}\rho(\xi)e^{i\xi x} 
	\sum_{j=1}^{2}\frac{1}{i\xi}e^{\lambda_j(\xi) t}\phi_j(\xi,x)\left<\tilde\phi_j(\xi,\cdot),\mathcal{B}_T(v)(\xi,\cdot)\right>_{L^2_1}
\end{align*}
For the non-zero frequencies, noting that Lemma \ref{l:spec_prep} implies that the  eigenfunctions $\left\{\phi_j(\xi)\right\}_{j=1}^2$ of $L_\xi$ expand as 
\begin{align*}
\phi_j(\xi,x)&=\beta_{2}^{(j)}(\xi)\left[\bar{u}'(x)+\left(q_{2}(\xi,x)-\bar{u}'(x)\right)\right]+(i\xi)\beta_1^{(j)}(\xi)q_{1}(\xi,x)\\
&=\beta_{2}^{(j)}(\xi)\bar{u}'(x)+\mathcal{O}(\xi),
\end{align*}
for $0<|\xi|\ll 1$ suggests the decomposition
\begin{equation}\label{e:sc}
\left\{
\begin{aligned}
S_{c,N}(t)v(x)&=\frac{1}{NT}e^{L_0t}\Pi(0)\mathcal{B}_T(v)(0,x)\\
&\quad+\bar{u}'(x)\left(\frac{1}{NT}\sum_{\xi\in\Omega_N\setminus\{0\}}\rho(\xi)e^{i\xi x} 
		\sum_{j=1}^{2}\frac{1}{i\xi}e^{\lambda_j(\xi)t}\beta_{2}^{(j)}(\xi)\left<\tilde\phi_j(\xi,\cdot),\mathcal{B}_T(v)(\xi,\cdot)\right>_{L^2_1}\right)\\
&\qquad+	\frac{1}{NT}\sum_{\xi\in\Omega_N\setminus\{0\}}\rho(\xi)e^{i\xi x} 
				\sum_{j=1}^{2}e^{\lambda_j(\xi) t}\frac{\phi_j(\xi,x)-\beta_{2}^{(j)}(\xi)\bar{u}'(x)}{i\xi}\left<\tilde\phi_j(\xi,\cdot),\mathcal{B}_T(v)(\xi,\cdot)\right>_{L^2_1}\\				
&=:\frac{1}{NT}e^{L_0t}\Pi(0)\mathcal{B}_T(v)(0,x)+\bar{u}'(x)s_{p,N}(t)v(x)+\widetilde{S}_{c,N}(t)v(x).
\end{aligned}
\right.
\end{equation}
Additionally, note the $\xi=0$ term above can be expressed as 
\[
e^{\mathcal L_0t}\Pi(0)\mathcal{B}_T(v)(0,x)=
e^{\mathcal L_0 t} \left<1,B_1(v)(0,\cdot)\right>_{L^2_1}+\bar{u}'(x)\left<\psi^{adj}, B_1(v)(0,\cdot)\right>_{L^2_1}.
\]
Observing that\footnote{Indeed, observe that $\frac{d}{dt}\left(1-t\bar{u}'\right) = -\bar{u}'=\L_0(1)=\L_0(1-t\bar{u}')$ and clearly the associated initial condition is satisfied.}
\[
e^{\mathcal L_0t}1=1-t\bar{u}'
\]
and noting that \eqref{e:useful_identity} implies the identities
\[
\int_0^T\mathcal{B}_T(v)(0,x)dx =T\int_0^{NT}v(x)dx~~{\rm and}~~\left<\psi^{adj},\mathcal{B}_T(v)(0,\cdot)\right>_{L^2_1}=T\left<\psi^{adj},v\right>_{L^2_N}
\]
we find the representation 
\begin{align*}
\frac{1}{NT}e^{\mathcal L_0t}\Pi(0)\mathcal{B}_T(v)(0,x)&=\frac{1}{N}\int_0^{NT}v(z)dz\\
&\quad			+\frac{1}{N}\bar{u}'(x)\left(\left<\psi^{adj}, v\right>_{L^2_N}-t\int_0^{NT}v(z)dz\right).
\end{align*}
Observe that the above representation of the $\xi=0$ contribution of the solution operator clearly demonstrates the expected linear instability\footnote{This is naturally
expected due to the Jordan block at the $\lambda=0$ eigenvalue of $\L_0$.} of the underlying wave $\bar{u}$.
In the forthcoming analysis nonlinear analysis, this linear instability will be compensated by allowing
for Galilean boosts of the underlying wave.

Taken together, it follows that the linear solution operator $e^{\L t}$ acting on $L^2_N$ can be decomposed as
\begin{equation}\label{e:lin_decomp_final}
\left\{
\begin{aligned}
e^{\L t}v(x)&= \bar{u}'(x)\left(\frac{1}{N}\left<\psi^{adj}, v\right>_{L^2_N}-\frac{t}{N}\int_0^{NT}v(z)dz + s_{p,N}(t)v(x)\right)\\
&\qquad +\frac{1}{N}\int_0^{NT}v(z)dz + \widetilde{S}_N(t)v(x),
\end{aligned}
\right.
\end{equation}
where here
\[
\widetilde{S}_N(t) := S_{hf,N}(t) + \widetilde{S}_{lf,N}(t)+\widetilde{S}_{c,N}(t)
\]
and the operators $S_{hf,N}(t)$ and $\widetilde{S}_{lf,N}(t)$  are defined in \eqref{e:shf} and \eqref{e:slf}, respectively, and  $\widetilde{S}_{c,N}(t)$ and $s_{p,N}(t)$ are defined in \eqref{e:sc}.
With this decomposition in  hand, we now establish temporal estimates on the above which are uniform in $N\in\NM$.

\begin{proposition}[Linear Estimates]\label{p:lin_est}
Suppose that $\bar{u}$ is a $T$-periodic diffusively spectrally stable traveling wave solution of \eqref{e:ks} with wave speed $c$.  Given any $M\in\NM$ there 
exists a constant $C>0$ such that for all $N\in\NM$, $t\geq 0$ and all $1	\leq \ell,m,r\leq M$ we have
\begin{equation}\label{e:lin_bd1}
\left\|\partial_x^\ell\partial_t^m s_{p,N}(t)v\right\|_{L^2_N}\leq C (1+t)^{1/4-(\ell+m)/2}\|v\|_{L^2_N}
\end{equation}
and
\begin{equation}\label{e:lin_bd2}
\left\|\partial_x^\ell\partial_t^ms_{p,N}(t)\partial_x^r v\right\|_{L^2_N}\leq C (1+t)^{-1/4-(\ell+m)/2}\|v\|_{L^2_N}.
\end{equation}
Further, there exist constants $C,\eta>0$ such that for all $t\geq 0$ $N\in\NM$, and all  $0\leq \ell,m,r-1\leq M$ we have
\begin{equation}\label{e:lin_bd3}
\left\|\partial_x^\ell\partial_t^m\widetilde{S}_N(t)v\right\|_{L^2_N}\leq C\left((1+t)^{-1/4}\|v\|_{L^1_N} + e^{-\eta t}\|v\|_{H^{\ell+4m}_N}\right)
\end{equation}
and
\begin{equation}\label{e:lin_bd4}
\left\|\partial_x^\ell\partial_t^m\widetilde{S}_N(t)\partial_x^r v\right\|_{L^2_N}\leq C\left((1+t)^{-3/4}\|v\|_{L^1_N} + e^{-\eta t}\|v\|_{H^{r+\ell+4m}_N}\right).
\end{equation}
Finally, there exists a constant $C>0$, independent of $N$, such that for all $t\geq 0$ we have
\begin{equation}\label{e:lin_bd5}
\left\|s_{p,N}(t)v\right\|_{L^\infty_N}\leq C \|v\|_{L^1_N\cap L^2_N}~~{\rm and}~~\left\|s_{p,N}(t)\partial_x v\right\|_{L^\infty_N}\leq C(1+t)^{-1/2} \|v\|_{L^1_N\cap L^2_N}.
\end{equation}
\end{proposition}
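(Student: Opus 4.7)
The plan is to work entirely in the Bloch-frequency domain, leveraging the explicit formulas \eqref{e:shf}--\eqref{e:sc} together with the subharmonic Parseval identity \eqref{e:parseval} and the spectral preparation of Lemma \ref{l:spec_prep}. The discrete Bloch sums $\frac{1}{NT}\sum_{\xi\in\Omega_N}(\cdot)$ will be treated as Riemann-sum approximations to $\frac{1}{2\pi}\int_{-\pi/T}^{\pi/T}(\cdot)\,d\xi$, so that continuous-Fourier heat-kernel bounds transfer to the discrete setting uniformly in $N$. A crucial ingredient on the input side is the $N$-independent estimate $\|\mathcal{B}_T(v)(\xi,\cdot)\|_{L^2(0,T)}\leq C\sqrt{T}\|v\|_{L^1_N}$, which places the $L^1_N$-norm of $v$ in $L^\infty_\xi$, while for $L^2$-type inputs we appeal instead to $\sum_\xi\|\mathcal{B}_T(v)(\xi,\cdot)\|^2_{L^2_1} = NT^2\|v\|^2_{L^2_N}$ (the global isometry).

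For the estimates \eqref{e:lin_bd1}--\eqref{e:lin_bd2} on $s_{p,N}$, I would exploit the fact that $s_{p,N}(t)v(x) = \frac{1}{NT}\sum_{\xi\in\Omega_N\setminus\{0\}}e^{i\xi x}\alpha_\xi(t)$ is a pure superposition of $e^{i\xi x}$-modes, so Parseval gives $\|\partial_x^\ell\partial_t^m s_{p,N}(t)v\|_{L^2_N}^2 = \frac{1}{NT}\sum_\xi |(i\xi)^\ell\lambda_j(\xi)^m\alpha_\xi(t)|^2$. Since $|\lambda_j(\xi)|\sim|\xi|$ and the $\frac{1}{i\xi}$ factor in $\alpha_\xi$ combines with the $\partial$-factors to produce a multiplier of magnitude $|\xi|^{\ell+m-1}e^{-d\xi^2 t/2}$, the stated rate follows from the standard integral estimate $\int\rho^2|\xi|^{2(\ell+m-1)}e^{-d\xi^2 t}\,d\xi \lesssim (1+t)^{1/2-(\ell+m)}$ combined with the uniform-in-$\xi$ bound on $\langle\tilde\phi_j(\xi),\mathcal{B}_T(v)(\xi,\cdot)\rangle$. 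For \eqref{e:lin_bd2}, the additional $\partial_x^r$ on the input is transferred to $\tilde\phi_j(\xi)$ via integration by parts in $x$ in the $L^2_1$-inner product; the specific structure $\tilde\phi_j(\xi) = \tilde\beta_1^{(j)}(\xi)\tilde q_1(\xi) + (i\xi)\tilde\beta_2^{(j)}(\xi)\tilde q_2(\xi)$ together with $\tilde q_1(0)\equiv 1$ ensures the derivatives produce an extra $(i\xi)^r$ factor, improving the rate by $(1+t)^{-r/2}$.

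For the $\widetilde S_N$ bounds \eqref{e:lin_bd3}--\eqref{e:lin_bd4}, I would treat the three pieces of $\widetilde S_N = S_{hf,N} + \widetilde S_{lf,N} + \widetilde S_{c,N}$ individually. Proposition \ref{p:semigrp_basicbds}(i)--(ii) gives exponential decay of $S_{hf,N}$ and $\widetilde S_{lf,N}$ in $L^2_N$, with output derivatives paid for by Sobolev regularity on $v$ (each $\partial_t$ costs four $\partial_x$'s through $\partial_t = \L$ at top order), producing the $e^{-\eta t}\|v\|_{H^{\ell+4m}_N}$ contribution. For the critical residual $\widetilde S_{c,N}$, the essential observation is the Taylor expansion $\phi_j(\xi,\cdot) - \beta_2^{(j)}(\xi)\bar u'(\cdot) = (i\xi)\beta_1^{(j)}(\xi)q_1(\xi,\cdot) + \beta_2^{(j)}(\xi)[q_2(\xi,\cdot)-\bar u'(\cdot)] = O(\xi)$ uniformly in $x$, which exactly cancels the $\frac{1}{i\xi}$ singularity; the resulting smooth-in-$\xi$ multiplier of magnitude $|\xi|^{\ell+m}e^{-d\xi^2 t/2}$ yields the claimed $(1+t)^{-1/4}$ (or $(1+t)^{-3/4}$ with an extra input $\partial_x^r$) rate via the same Riemann-sum argument driven by the $L^1_N$-input estimate. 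Finally, for the pointwise bound \eqref{e:lin_bd5} I would apply the triangle inequality directly to the Bloch sum and use $\|\mathcal{B}_T(v)(\xi,\cdot)\|_{L^2_1}\leq C\|v\|_{L^1_N\cap L^2_N}$; summing the kernel $\rho(\xi)|\xi|^{-1}e^{-d\xi^2 t/2}$ via Riemann approximation produces an error-function-type object bounded uniformly in $t$, while $\partial_x$ cancels the $|\xi|^{-1}$ singularity and leaves the Gaussian integral $\int\rho e^{-d\xi^2 t/2}\,d\xi \sim (1+t)^{-1/2}$.

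The main obstacle throughout is maintaining uniformity in $N$. No estimate may place ``mass'' on individual Bloch modes, since $|\Omega_N| = N$ and naive pointwise bounds on $\|\mathcal{B}_T(v)(\xi,\cdot)\|_{L^2_1}$ in terms of $\|v\|_{L^2_N}$ grow like $\sqrt{N}$; this forces both the Riemann-sum interpretation and the systematic use of the $L^\infty_\xi L^2_x$ Bloch bound via $\|v\|_{L^1_N}$ whenever polynomial-in-$t$ decay is required. A secondary but important subtlety is the non-trivial Jordan structure at $\xi = 0$, which introduces the $\frac{1}{i\xi}$ singularity in $s_{p,N}$ and makes these estimates intrinsically weaker (by a half-derivative) than their $\widetilde S_{c,N}$ analogues; this is ultimately what produces the slower $(1+t)^{-1/4}$ critical decay rate for the KdV/KS system compared to the pure reaction-diffusion analysis in \cite{JP21}.
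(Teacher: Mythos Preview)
Your approach to the $L^2_N$ estimates \eqref{e:lin_bd1}--\eqref{e:lin_bd4} is essentially the paper's: Parseval, the $L^\infty_\xi$ bound $|\langle\tilde\phi_j(\xi),\mathcal{B}_T(v)(\xi)\rangle|\lesssim\|v\|_{L^1_N}$, and the Riemann-sum estimate $\frac{1}{N}\sum_{\xi\in\Omega_N}|\xi|^{2\omega}e^{-2d\xi^2 t}\lesssim(1+t)^{-\omega-1/2}$ (this is Lemma~A.1 of \cite{JP21}). One minor correction: the gain from $\partial_x^r$ on the input is only a single factor of $\xi$, not $(i\xi)^r$. After integration by parts, $\partial_x^r\tilde\phi_j(\xi)=O(\xi)$ because $\tilde\phi_j(0,\cdot)$ is constant, but $\partial_x^r\tilde\phi_j(\xi)$ is not $O(\xi^r)$ in general --- the analyticity of $\tilde q_1(\xi,\cdot)$ in $\xi$ gives one order of vanishing, not $r$. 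This does not affect the stated bound \eqref{e:lin_bd2}, whose rate is independent of $r$, but your explanation of the mechanism is off.

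There is a genuine gap in your argument for the first $L^\infty$ bound in \eqref{e:lin_bd5}. Applying the triangle inequality to the Bloch sum and pulling out $\sup_\xi|\langle\tilde\phi_j(\xi),\mathcal{B}_T(v)(\xi)\rangle|$ leaves you with
\[
\frac{1}{NT}\sum_{\xi\in\Omega_N\setminus\{0\}}\rho(\xi)\,|\xi|^{-1}e^{-d\xi^2 t},
\]
and this sum is \emph{not} uniformly bounded in $N$: at $t=0$ it is essentially $\sum_{k=1}^{cN}\frac{1}{2\pi k}\sim\frac{1}{2\pi}\log N$. The integral you are Riemann-approximating, $\int\rho(\xi)|\xi|^{-1}\,d\xi$, is logarithmically divergent at $\xi=0$, so no Riemann-sum argument can rescue it. The phrase ``error-function-type object'' is the right intuition but the wrong implementation: the error function arises only if you \emph{retain} the oscillatory factor $e^{i\xi x}/(i\xi)$ rather than taking absolute values. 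The paper's route is to first replace the $\xi$-dependent coefficients $\beta_2^{(j)}(\xi)$, $\tilde\phi_j(\xi)$, and $\lambda_j(\xi)+d_j\xi^2$ by their values at $\xi=0$ (the resulting $O(\xi)$ error cancels the $|\xi|^{-1}$ and is handled by your triangle-inequality argument), and then recognize the remaining sum
\[
\frac{1}{NT}\sum_{k\neq 0}\frac{e^{i\xi_k x}}{i\xi_k}\,e^{-d_j\xi_k^2 t}\,\tilde\nu_j\cdot\hat v(\xi_k)
\]
as the convolution of $v$ with a genuine $NT$-periodic error function, which is bounded in $L^\infty$ uniformly in $N$ and $t$ (at $t=0$ it is the sawtooth, and the Gaussian damping only helps). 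Young's inequality then gives the bound by $\|v\|_{L^1_N}$. The oscillation is essential here and cannot be discarded.
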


\begin{proof}
We begin by establishing the $L^2_N$ estimates above.  To this end, first note that the definition of $\mathcal{B}_T$ implies that
\begin{align*}
\left<\tilde\phi_j(\xi,\cdot),\mathcal{B}_T(v)(\xi,\cdot)\right>_{L^2_1}&=\int_0^T\overline{\tilde\phi_j(\xi,x)}\sum_{\ell\in\ZM}e^{2\pi i\ell x/T}\hat{v}(\xi+2\pi\ell/T)\\
&=\sum_{\ell\in\ZM}\hat{v}(\xi+2\pi\ell/T)\int_0^T e^{2\pi i\ell x/T}\overline{\tilde\phi_j(\xi,x)}dx\\
&=\sum_{\ell\in\ZM}\hat{v}(\xi+2\pi\ell/T)\overline{\widehat{\tilde\phi_j}(\xi,2\pi\ell/T)}.
\end{align*}
Since  $\|\hat{g}\|_{L^\infty(\RM)}\leq \|g\|_{L^1_N}$ by \eqref{e:Fourier}, an application of Cauchy-Schwarz 
implies the existence of a constant $C>0$, independent of $N$, such that
\begin{align*}
\rho(\xi)\left|\left<\tilde\phi_j(\xi,\cdot),\mathcal{B}_T(v)(\xi,\cdot)\right>_{L^2_1}\right|^2&\leq	
		\rho(\xi)\|v\|_{L^1_N}^2\left(\sum_{\ell\in\ZM}(1+|\ell|^2)^{1/2}\left|\overline{\widehat{\tilde\phi_j}(\xi,2\pi\ell/T)}\right|(1+|\ell|^2)^{-1/2}\right)\\
&\leq C\|v\|_{L^1_N}^2\sup_{\xi\in[-\pi/T,\pi/T)}\left(\rho(\xi)\|\tilde\phi_j(\xi,\cdot)\|_{H^1_{\rm per}(0,T)}^2\right),
\end{align*}
valid for all $\xi\in\Omega_N$.  Using the subharmonic Parseval identity \eqref{e:parseval}, along with Lemma \ref{l:spec_prep}, we find that 
\begin{equation}\label{e:lin_bd_1}
\begin{aligned}
\left\|\partial_x^\ell\partial_t^m s_{p,N}(t)v\right\|_{L^2_N}^2\leq&
	\frac{1}{NT^2}\sum_{j=1}^2\sum_{\xi\in\Omega_N\setminus\{0\}}\left\|\rho(\xi)(i\xi)^{\ell-1}(\lambda_j(\xi))^m e^{\lambda_j(\xi)t}\left<\tilde\phi_j(\xi,\cdot),\mathcal{B}_T(v)(\xi,\cdot)\right>_{L^2_1}\right\|_{L^2_1}^2\\
&\leq C\|v\|_{L^1_N}^2\left(\frac{1}{NT}\sum_{\xi\in\Omega_N\setminus\{0\}}|\xi|^{2(\ell+m-1)}e^{2d\xi^2t}\right),
\end{aligned}
\end{equation}
where here $C,d>0$ are constants which are independent of $N$.  By Similar considerations, we find
\begin{equation}\label{e:lin_bd_2}
\left\|\partial_x^\ell\partial_t^m\widetilde{S}_N(t)v\right\|_{L^2_N}^2\leq Ce^{-2\eta t}\|v\|_{L^2_N}^2
	+C\|v\|_{L^1_N}^2\left(\frac{1}{NT}\sum_{\xi\in\Omega_N\setminus\{0\}}|\xi|^{2(\ell+m)}e^{2d\xi^2t}\right)
\end{equation}
where, again, the constants $C,d>0$ are independent of $N$.

Continuing, note that integration by parts yields the identity
\[
\mathcal{B }_T(\d_x g)(\xi,x) = (\d_x+i\xi)\mathcal{B}_T(g)(\xi,x).
\]
%
%
It follows that for each $1\leq j\leq 2$ and integer $r\geq 1$ we have
\begin{equation}\label{e:extra_der}
\begin{aligned}
&\left<\tilde\phi_j(\xi,\cdot),\mathcal{B}_T(\partial_x^r g)(\xi,\cdot)\right>_{L^2_1}=\left<\tilde\phi_j(\xi,\cdot),(\partial_x+i\xi)^r\mathcal{B}_T(g)(\xi,\cdot)\right>_{L^2_1}\\
&\qquad\qquad=\sum_{r'=0}^r\left(\begin{array}{c}r\\r'\end{array}\right)(i\xi)^{r'}\left<\tilde\phi_j(\xi,\cdot),\partial_x^{r-r'}\mathcal{B}_T(g)(\xi,\cdot)\right>_{L^2_1}\\
&\qquad\qquad=(-1)^r\left<\partial_x^r\tilde\phi_j(\xi,\cdot),\mathcal{B}_T( g)(\xi,\cdot)\right>_{L^2_1}
		 +\sum_{r'=1}^r\left(\begin{array}{c}r\\r'\end{array}\right)(i\xi)^{r'}\left<\tilde\phi_j(\xi,\cdot),\partial_x^{r-r'}\mathcal{B}_T(g)(\xi,\cdot)\right>_{L^2_1},
\end{aligned}
\end{equation}
where the last equality follows by integrating by parts in the $r'=0$ term.  Since Lemma \ref{l:spec_prep} implies that $\tilde\phi_j(0,x)$ is a constant vector for each $\xi$, we have
\[
\partial_x\tilde\phi_j(\xi,x)=\xi\partial_x\left(\frac{\tilde\phi_j(\xi,x)-\tilde\phi_j(0,x)}{\xi}\right),
\]
which is clearly $\mathcal{O}(\xi)$ by the analytic dependence of $\tilde\phi_j(\xi,x)$ on $\xi$.  Taken together, it follows that for each $r\geq 1$ we have
\begin{equation}\label{e:lin_bd_3}
\begin{aligned}
\left\|\partial_x^\ell\partial_t^m s_{p,N}(t)\partial_x^r v\right\|_{L^2_N}^2\leq&
C\|v\|_{L^1_N}^2\left(\frac{1}{NT}\sum_{\xi\in\Omega_N\setminus\{0\}}|\xi|^{2(\ell+m)}e^{2d\xi^2t}\right)
\end{aligned}
\end{equation}
and, similarly,
\begin{equation}\label{e:lin_bd_4}
\left\|\partial_x^\ell\partial_t^m\widetilde{S}_N(t)\partial_x^r v\right\|_{L^2_N}^2\leq Ce^{-2\eta t}\|v\|_{H^r_N}^2
	+C\|v\|_{L^1_N}^2\left(\frac{1}{NT}\sum_{\xi\in\Omega_N\setminus\{0\}}|\xi|^{2(\ell+m+1)}e^{2d\xi^2t}\right)
\end{equation}

To complete the proof of the $L^2_N$ bounds, it remains to provide bounds on the discrete, $N$-dependent sums in \eqref{e:lin_bd_1}-\eqref{e:lin_bd_4}. 
These uniform bounds come by directly applying Lemma A.1 in \cite{JP21}, which states that for any integer $\omega\geq 0$, there exists a constant $C>0$, independent of $N$, 
such that\footnote{As motivation, observe that the sum $\frac{1}{NT}\sum_{\xi\in\Omega_N}\xi^{2\omega}e^{-2d\xi^2t}$
can be considered as a Riemann sum approximation for the integral $\int_{-\pi/T}^{\pi/T}\xi^{2\omega}e^{-2d\xi^2t}d\xi$, which exhibits the 
stated temporal decay via a routine scaling argument.}
\[
\frac{1}{N}\sum_{\xi\in\Omega_N}\xi^{2\omega}e^{-2d\xi^2t}\leq C(1+t)^{-\omega-1/2}.
\]
Applying this result to \eqref{e:lin_bd_1}-\eqref{e:lin_bd_4} with the appropriate values of $\omega$ establishes
the $L^2_N$ estimates stated in \eqref{e:lin_bd1}-\eqref{e:lin_bd4}.

It remains to establish the $L^\infty$ estimates stated in \eqref{e:lin_bd5}.  To this end, note that by similar estimates
as above, the $L^\infty$ difference between $s_{p,N}(t)v(x)$ and the function
\begin{equation}\label{e:approx1}
\sum_{j=1}^2\frac{1}{NT}\sum_{k\in\ZM\setminus\{0\}}e^{i\xi_k}\frac{e^{-d_j\xi_k^2t}}{i\xi_k}\beta_2^j(0)\left<\widetilde\phi_j(0,\cdot),\mathcal{B}_T(v)(\xi_k,\cdot)\right>_{L^2_1}
=:s^{(1)}_{p,N}(t)v(x)+s^{(2)}_{p,N}(t)v(x)
\end{equation}
is controlled by $(1+t)^{-1/2}\|v\|_{L^1_N}$, where here $\xi_k:=2\pi k/NT$ for each $k\in\ZM$.  Further, since for each $j=1,2$ the function $\widetilde\phi_j(0,x)$ is actually 
a constant equal to $\tilde\nu_j:=\left(\tilde\beta_1^{(j)}(0),\tilde\beta_2^{(j)}(0)\right)$, we note that
\[
\left<\widetilde\phi_j(0,\cdot),\mathcal{B}_T(v)(\xi_k,\cdot)\right>_{L^2_1}=\tilde\nu_j\cdot\hat{v}(\xi_k)
\]
and hence, since  \eqref{e:approx1} is in the form of a Fourier series itself, for each $j=1,2$ the function $s^{(j)}_{p,N}(t)v(x)$ 
can be recognized as the convolution of $\tilde\nu_j\cdot v$ with
\begin{equation}\label{e:conv}
\frac{1}{NT}\sum_{k\in\ZM\setminus\{0\}}e^{i\xi_k}\frac{e^{-d_j\xi_k^2t}}{i\xi_k}\beta_2^j(0).
\end{equation}
Recognizing the above sums as $T$-periodic Fourier series representations for Error functions, which are clearly bounded in $L^\infty$,
this establishes the first $L^\infty$ bound in \eqref{e:lin_bd5}.  The second bound in \eqref{e:lin_bd5} now follows by using precisely 
the same procedure as above, while noting that \eqref{e:extra_der} implies the extra derivative on $v$ yields an extra $\mathcal{O}(\xi)$
factor in the sum \eqref{e:conv} which, in turn, yields the additional $(1+t)^{-1/2}$ decay.
\end{proof}

Before continuing to our nonlinear analysis, we first provide an interpretation of the above decomposition of the linear solution operator.  To this end, suppose
$\bar{u}$ is a $T$-periodic diffusively spectrally stable periodic traveling wave solution of \eqref{e:ks}, and suppose that $u(x,t)$ is a solution (in same co-moving frame)
with initial data $u(x,0)=\phi(x)+\eps v(x)$ with $|\eps|\ll 1$ and $v\in L^1_N\cap L^2_N$.  From the decomposition \eqref{e:lin_decomp_final} and the linear estimates
in Proposition \ref{p:lin_est}, it is then natural to suspect that
\begin{align*}
u(x,t)&\approx \bar{u}(x)+\eps e^{\L t}v(x)\\
&\approx \bar{u}(x)+\eps\bar{u}'(x)\left(\frac{1}{N}\left<\psi^{adj},v\right>_{L^2_N}+s_{p,N}(t)v(x)-\frac{t}{N}\int_0^{NT}v(z)dz\right)+\frac{\eps}{N}\int_0^{NT}v(z)dz\\
&\approx \bar{u}\left(x+\eps\left(\frac{1}{N}\left<\psi^{adj},v\right>_{L^2_N}+s_{p,N}(t)v(x)\right)-\frac{\eps t}{N}\int_0^{NT}v(z)dz\right)+\frac{\eps}{N}\int_0^{NT}v(z)dz,
\end{align*}
which is a (small, via the $L^\infty$ bounds in \eqref{e:lin_bd5}) spatio-temporal phase modulation of the background wave $\bar{u}$ together with an (identical) wave-speed and mass correction.  
In particular, recalling that solutions of the KdV/KS equation \eqref{e:ks} are invariant under the Galilean transformation \eqref{galilean}
the above suggests that the initially nearby wave $u(x,t)$, up to  spatio-temporal phase modulation, will asymptotically
approach a member of the Galilean family associated to the background wave $\bar{u}$.  In the next section, we verify this intuition.

\section{Uniform Nonlinear Asymptotic Stability} \label{S:nonlinear}

Our goal is to now use the linear decomposition \eqref{e:lin_decomp_final} in order to establish the proof of Theorem \ref{T:main}.  
As discussed above, 
a small subharmonic perturbation of a $T$-periodic, diffusively spectrally stable periodic traveling wave solution $\bar{u}$ of \eqref{e:ks}
will, for long time, behave like a coupled temporal mass modulation and spatio-temporal phase modulation of background wave $\bar{u}$.

\subsection{Nonlinear Decomposition, Perturbation Equations \& Damping}

Suppose that $\bar{u}$ is a $T$-periodic diffusively spectrally stable periodic traveling wave solution of \eqref{e:ks}.  Motivated
by the decomposition \eqref{e:lin_decomp_final}, we develop a decomposition of nonlinear, subharmonic perturbations of the background wave $\bar{u}$
which accounts for the phase and mass modulations predicted by the linear theory.  

To this end, suppose that ${u}(x,t)$ is a solution (in the same co-moving frame) with initial data ${u}(x,0)\in L^1_N\cap L^2_N$
which is close (in $L^2_N$) to $\bar{u}$.  By conservation of mass, we note that
\[
\int_0^{NT}{u}(x,t)dx = \int_0^{NT}{u}(x,0)dx
\]
for all $t\geq 0$ for which it is defined.  In particular, unless the initial data ${u}(x,0)$ has the same mass as the underlying wave $\bar{u}$,
we should not expect asymptotic convergence of ${u}(x,t)$ to $\bar{u}$.  Recalling, however, that solutions of \eqref{e:ks} obey
the Galilean invariance \eqref{galilean}, it is natural to suspect that the added mass from the initial perturbation may induce a Galilean boost
of the background wave $\bar{u}$.  Specifically, given such a ${u}(x,0)\in L^1_N\cap L^2_N$ and defining 
\[
\delta M:=\frac{1}{NT}\int_0^{NT}\left({u}(z,0)-\bar{u}(z)\right)dz,
\]
we expect that the associated nearby solution ${u}(x,t)$ should satisfy
\[
{u}(x,t)\approx \bar{u}\left(x-\left(\delta M\right) t\right)+\delta M
\]
for $t\gg 1$. Note that for each fixed $N$ this is exactly the long-time dynamics predicted by Proposition \ref{p:sub_fixedN}
As discussed in the introduction, however, the result of Proposition \ref{p:sub_fixedN} lacks uniformity with respect to the period of the perturbations
in both the rate of decay of perturbations and the allowable size of initial perturbations.  In this section, we use the linear results in 
Section \ref{S:lin} above, which were designed specifically to be uniform in the perturbation's period, to establish our main result Theorem \ref{T:main}.

To this end, let ${u}(x,t)$ be a solution (in the same co-moving frame as $\bar{u}$) with initial data ${u}(x,0)\in L^1_N\cap L^2_N$
which is close (in $L^2_N$) to $\bar{u}$ and consider a nonlinear perturbation of the form
\begin{equation}\label{e:pert}
    v(x,t) := {u}\left(x + \left(\delta M\right)t - \frac{1}{N}\gamma(t) - \psi(x,t), t\right) - \delta M - \bar{u}(x)
\end{equation}
where $\gamma:\RM_+\to\RM$ and $\psi:\RM\times\RM_+\to\RM$ are functions to be determined later\footnote{Note though that we assume $\psi(\cdot,t)\in L^2_N$ for each $t\geq 0$.}.
Note, in particular, that integrating \eqref{e:pert} over $[0,NT]$ gives
\[
\int_0^{NT}{u}(z,t)dz = \int_0^{NT}\bar{u}(z)dz+NT\left(\delta M\right) + \int_0^{NT}v(z,t)dz
\]
which, since the integral of ${u}(\cdot,t)$ over $[0,NT]$ is conserved by the flow of \eqref{e:ks}, implies that
\begin{equation}\label{e:pert_mass}
\int_0^{NT}v(z,t)dz = 0
\end{equation}
for all $t\geq 0$ for which it is defined by the choice of $\delta M$ above.  

With the above decomposition in hand, we next derive equations that must be satisfied by the perturbation $v$ and the modulation functions
$\gamma$ and $\psi$.

\begin{proposition}\label{p:nonlin_pert}
The triple $(v,\gamma,\psi)$ satisfies
\begin{equation}\label{nonlin_pert}
(\d_t - \mathcal{L})\left((1-\psi_x)v + \frac{1}{N}\gamma \bar{u}' + \psi\bar{u}'\right) = \d_x\mathcal{N}, 
\end{equation}
where
\[
\d_x\mathcal{N} = \d_x\mathcal{Q} + \d_x\mathcal{R} + \mathcal{L}(\psi_x v),\quad \mathcal{Q} = -\frac{1}{2}v^2,
\]
and 
\begin{align*}
    \mathcal{R} &= -\psi_t v - \frac{1}{N}\gamma'v - \eps\left[\d_x\left(\frac{\psi_x}{1-\psi_x}v_x\right) + \frac{\psi_x}{1-\psi_x}\d_x\left(\frac{1}{1-\psi_x}v_x\right)\right] - \delta\left(\frac{\psi_x}{1-\psi_x}v_x\right)\\
    &\qquad - \delta\left[\d_x^2\left(\frac{\psi_x}{1-\psi_x}v_x\right) + \d_x\left(\frac{\psi_x}{1-\psi_x}\d_x\left(\frac{1}{1-\psi_x}v_x\right)\right) + \frac{\psi_x}{1-\psi_x}\d_x\left(\frac{1}{1-\psi_x}\d_x\left(\frac{1}{1-\psi_x}v_x\right)\right)\right]\\
    &\qquad - \eps\left[\frac{\psi_x}{1-\psi_x}\d_x\left(\frac{\psi_x}{1-\psi_x}\bar{u}'\right) + \d_x\left(\frac{\psi_x^2}{1-\psi_x}\bar{u}'\right) + \frac{\psi_x^2}{1-\psi_x}\bar{u}''\right] - \delta\left(\frac{\psi_x^2}{1-\psi_x}\bar{u}'\right)\\
    &\qquad - \delta\left[\frac{1}{1-\psi_x}\d_x\left(\frac{\psi_x}{1-\psi_x}\d_x\left(\frac{\psi_x}{1-\psi_x}\bar{u}'\right)\right) + \frac{\psi_x}{1-\psi_x}\d_x^2\left(\frac{\psi_x}{1-\psi_x}\bar{u}'\right) + \frac{\psi_x}{1-\psi_x}\d_x\left(\frac{\psi_x}{1-\psi_x}\bar{u}''\right)\right.\\
    &\qquad+ \left. \d_x^2\left(\frac{\psi_x^2}{1-\psi_x}\bar{u}'\right) + \d_x\left(\frac{\psi_x^2}{1-\psi_x}\bar{u}''\right) + \frac{\psi_x^2}{1-\psi_x}\bar{u}'''\right].
\end{align*}
\end{proposition}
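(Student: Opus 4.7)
The proof is a direct, if lengthy, chain-rule computation that exploits the Galilean symmetry to eliminate the constant mass correction $\delta M$. First, I would define
\[
\tilde u(z,t) := u\bigl(z + (\delta M)t,\,t\bigr) - \delta M,
\]
which, by \eqref{galilean}, solves the same co-moving KdV/KS equation as $u$. With the change of coordinates $z(x,t) := x - \tfrac{1}{N}\gamma(t) - \psi(x,t)$, the definition \eqref{e:pert} becomes the clean identity $v(x,t) + \bar u(x) = \tilde u(z(x,t),\,t)$, reducing everything to pulling the $\tilde u$-equation back through the near-identity map $x \mapsto z$.

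Next, I would apply the chain rule. Letting $J := 1 - \psi_x = z_x$, one has $\partial_z = J^{-1}\partial_x$ and
\[
\tilde u_t = v_t + \tilde u_z\bigl(\tfrac{1}{N}\gamma'(t) + \psi_t\bigr),\qquad \tilde u_z = J^{-1}\bigl(v_x + \bar u'(x)\bigr),
\]
with analogous nested expressions for $\tilde u_{zz},\tilde u_{zzz},\tilde u_{zzzz}$ produced by iterating $J^{-1}\partial_x$. Substituting these into the co-moving equation $\tilde u_t = c\tilde u_z - \tilde u\tilde u_z - \eps\tilde u_{zzz} - \delta(\tilde u_{zz}+\tilde u_{zzzz})$ and multiplying through by $J$ to clear the denominators in the transport terms yields a single scalar relation among $v$, $\psi$, $\gamma$, $\bar u$ and their $x$-derivatives.

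The organizational step is then to subtract the profile equation \eqref{e:profile} (which kills all purely-$\bar u$ contributions) and regroup by order in the small quantities $(v,\psi,\gamma,\psi_x)$. The pieces that are linear in $(v,\psi,\gamma)$ after formally setting $\psi_x=0$ should assemble into
\[
(\partial_t - \L)\bigl[(1-\psi_x)v + \tfrac{1}{N}\gamma\,\bar u' + \psi\,\bar u'\bigr],
\]
with the Jacobian factor $(1-\psi_x)$ on $v$ coming from the pre-multiplication by $J$ and the two $\bar u'$-terms encoding the phase- and mass-modulations of the underlying wave; everything else is genuinely nonlinear. The Burgers term produces $\partial_x\mathcal Q = -\partial_x(\tfrac{1}{2}v^2)$, the time-derivative produces $-\psi_t v - \tfrac{1}{N}\gamma'v$, and the iterated $J^{-1}\partial_x$ generates the list of products of $\psi_x/(1-\psi_x)$ with $v_x$ and with $\bar u',\bar u'',\bar u'''$ visible in $\mathcal R$. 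The standalone $\L(\psi_x v)$ piece in $\partial_x\mathcal N$ is inserted precisely to compensate for the algebraic identity $\L\bigl[(1-\psi_x)v\bigr] = \L v - \L(\psi_x v)$, bridging the Jacobian-weighted quantity on the left of \eqref{nonlin_pert} with the unweighted $\L v$ action that naturally emerges on the right during regrouping.

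The main obstacle is the fourth-order term $\delta\tilde u_{zzzz}$: expanding $(J^{-1}\partial_x)^4(v+\bar u)$ produces many products of $\psi_x,\psi_{xx},\psi_{xxx},\ldots$ with $v_x,v_{xx}$ and with derivatives of $\bar u$, and the careful repackaging of these into the nested divergence form $\partial_x\bigl(\tfrac{\psi_x}{1-\psi_x}\partial_x(\cdots)\bigr)$ visible in $\mathcal R$ is what verifies that the right-hand side of \eqref{nonlin_pert} is truly in conservative form. This conservative structure is essential both for consistency with the mass identity \eqref{e:pert_mass} and because, in Section \ref{S:nonlinear}, the outer $\partial_x$ is what allows the nonlinear iteration to harvest the extra $(1+t)^{-1/2}$ decay provided by the linear bound \eqref{e:lin_bd2}.
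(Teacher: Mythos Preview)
Your proposal is correct and follows essentially the same route as the paper's proof in Appendix~\ref{A:pert_eqns}: chain-rule through the near-identity map, multiply by the Jacobian $J=1-\psi_x$, subtract the profile equation, use $\frac{1}{1-\psi_x}=1+\frac{\psi_x}{1-\psi_x}$ to extract the nested divergence structure, and finally add and subtract $\L(\psi_x v)$ to move the Jacobian factor onto the left-hand side. The only (cosmetic) difference is that you invoke the Galilean symmetry up front by passing to $\tilde u(z,t)=u(z+(\delta M)t,t)-\delta M$ before computing, whereas the paper carries the $\delta M$ terms through the first substitution and observes their cancellation afterward.
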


\begin{proof}
The proof of the above is a relatively routine, yet long calculation.  For completeness, we present the details in Appendix \ref{A:pert_eqns}.
\end{proof}

Our goal is now to obtain a closed nonlinear iteration scheme by integrating \eqref{nonlin_pert} and exploiting the decomposition of the linear 
solution operator $e^{\L t}$ provided in \eqref{e:lin_decomp_final}.  To motivate this, we first provide an informal description of how to identify
the modulation functions $\gamma$ and $\psi$.  To this end, note that using Duhamel's formula we can rewrite \eqref{nonlin_pert} as the equivalent
implicit integral equation
\[
\left(1-\psi_x(x,t)\right)v(x,t) + \frac{1}{N}\gamma(t) \bar{u}'(x) + \psi(x,t)\bar{u}'(x) = e^{\L t}v(x,0)+\int_0^t e^{\L (t-s)}\partial_x\mathcal{N}(x,s)ds,
\]
where here we have taken the initial data $\gamma(0)=0$, $\psi(\cdot,0)\equiv 0$ and $v(x,0) ={u}(x,0)-\delta M-\bar{u}(x)$.  
Recalling that \eqref{e:lin_decomp_final} implies the linear solution operator can be decomposed as
\[
\begin{aligned}
e^{\mathcal Lt}v(x) &=  \bar{u}'(x)\left[\underbrace{\frac{1}{N}\left<\psi^{adj},v\right>_{L^2_N}+s_{p,N}(t)v(x)}_{\rm phase~modulation}
	-t\underbrace{\left(\frac{1}{N}\int_0^{NT}v(z)dz\right)}_{\rm wave~speed~correction}\right]\\
&\quad+\underbrace{\frac{1}{N}\int_0^{NT}v(z)dz}_{\rm mass~modulation} +\underbrace{\widetilde{S}_N(t)v(x)}_{\rm faster~decaying~residual},
\end{aligned}
\]
it follows that we can remove the principle (i.e., slowest decaying) part
of the nonlinear perturbation by implicitly defining
\begin{equation}\label{e:mod_longtime}
\left\{\begin{aligned}
&\gamma(t)\sim \left<\psi^{adj},v(\cdot,0)\right>_{L^2_N}+\int_0^t\left<\psi^{adj},\partial_x\mathcal{N}(\cdot,s)\right>_{L^2_N}ds\\
&\psi(x,t)\sim s_{p,N}(t)v(x,0) + \int_0^t s_{p,N}(t-s)\partial_x\mathcal{N}(x,s)ds,
\end{aligned}\right.
\end{equation}
where here $\sim$ indicates equality for $t\geq 1$.  This choice then yields the implicit description
\begin{equation}\label{e:pert_longtime}
v(x,t)\sim\psi_x(x,t)v(x,t)+\widetilde{S}_N(t)v(x,0) + \int_0^t\widetilde{S}_N(t-s)\partial_x\mathcal{N}(x,s)ds
\end{equation}
involving only the faster decaying residual component of the linear solution operator.  

To make the above  choices consistent (in short time) with the prescribed
initial data, we simply interpolate between the initial data and the long-time choices prescribed in \eqref{e:mod_longtime}-\eqref{e:pert_longtime} above.  To this end,
let $\chi(t)$ be a smooth cutoff function that is zero for $t\leq 1/2$ and one for $t\geq 1$, and define the modulation functions $\gamma$ and $\psi$ implicitly
for all $t\geq 0$ as
\begin{equation}\label{e:mod_def}
\left\{\begin{aligned}
&\gamma(t)=\chi(t)\left[ \left<\psi^{adj},v(\cdot,0)\right>_{L^2_N}+\int_0^t\left<\psi^{adj},\partial_x\mathcal{N}(\cdot,s)\right>_{L^2_N}ds\right]\\
&\psi(x,t)=\chi(t)\left[ s_{p,N}(t)v(x,0) + \int_0^t s_{p,N}(t-s)\partial_x\mathcal{N}(x,s)ds\right],
\end{aligned}\right.
\end{equation}
which now leaves the implicit description
\begin{equation}\label{e:pert_def}
\begin{aligned}
v(x,t)&=\left(1-\chi(t)\right)\left[e^{\L t}v(x,0)+\int_0^te^{\L (t-s)}\partial_x\mathcal{N}(s)ds\right]\\
&\qquad +\chi(t)\left( \psi_x(x,t)v(x,t)+\widetilde{S}_N(t)v(x,0) + \int_0^t\widetilde{S}_N(t-s)\partial_x\mathcal{N}(x,s)ds\right)
\end{aligned}
\end{equation}
to be satisfied in $L^2_N$ for all $t\geq 0$.  Observe, however, that there is an inherent loss of derivatives associated with the system \eqref{e:mod_def}-\eqref{e:pert_def}.  For
example, attempting to control the $L^2_N$ norm of $v(\cdot,t)$ via the implicit equation \eqref{e:pert_def} requires control of at least the $H^4_N$ derivative
of $v$ (as well as various derivatives of $\gamma$ and $\psi$).  This loss of derivatives may be compensated by the fact that the damping in \eqref{e:ks} corresponds
to the highest-order spatial derivative, which allows high Sobolev norms of $v$ to be slaved to low Sobolev norms of $v$ plus sufficient control
on the modulation functions $\gamma$ and $\psi$.  This is the content of the following technical result.

\begin{proposition}[Nonlinear Damping]\label{p:damping}
Fix $N\in\NM$ and suppose that the nonlinear perturbation $v$ defined in \eqref{e:pert} satisfies $v(\cdot,0)\in H^5_N$.  Then there exist positive constants
$\theta$, $C$ and $\eps_0$ (independent of $N$) such that if $v$, $\psi$ and $\gamma$ solve the system \eqref{e:mod_def}-\eqref{e:pert_def} on $[0,\tau]$
for some $\tau>0$ and
\[
\sup_{t\in[0,\tau]}\left(\left\|(v,\psi_x)\right\|_{H^5_N}+\left\|\psi_t(t)\right\|_{H^4_N}+\left|\gamma'(t)\right|\right)\leq\eps_0
\]
then, for all $0\leq t\leq \tau$ we have
\begin{align*}
\|v(t)\|_{H^5_N}^2&\leq Ce^{-\theta t}\|v(0)\|_{H^5_N}^2\\
&\quad +C\int_0^t e^{-\theta (t-s)}\left(\|v(s)\|_{L^2_N}^2+\|\psi_x(s)\|_{H^6_N}^2+\|\psi_t(s)\|_{H^3_N}^2+|\gamma_t(s)|^2\right)ds.
\end{align*}
\end{proposition}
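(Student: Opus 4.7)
The plan is to run a high-order energy estimate that exploits the fourth-order dissipation $-\delta \partial_x^4$ inside $\L$, in the spirit of the damping estimate used in \cite{BJNRZ13}, but keeping all constants $N$-independent, which here comes for free because the coefficients $\bar u,\bar u',\ldots$ are $T$-periodic and all manipulations are local in $x$.

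First, I would extract a clean evolution equation for $v$ from \eqref{nonlin_pert}. Using $\L \bar u'=0$ (from \eqref{ker1}) the equation reads
\[
(\partial_t-\L)\bigl[(1-\psi_x)v\bigr]=\partial_x\mathcal{N}+\L(\psi\bar u')-\psi_t\bar u'-\tfrac{1}{N}\gamma'\bar u',
\]
and dividing by $1-\psi_x$ (which is pinched in $[1/2,3/2]$ once $\eps_0$ is small) and expanding $\partial_t[(1-\psi_x)v]=(1-\psi_x)v_t-\psi_{xt}v$ yields a PDE of the form
\[
v_t=-\delta v_{xxxx}+\mathcal{A}(\bar u,\psi_x)\bigl[v,v_x,v_{xx},v_{xxx}\bigr]+\mathcal{G},
\]
where the coefficient operator $\mathcal{A}$ is at most third order in $v$, polynomial in smooth functions of $\psi_x$, and $\mathcal{G}$ gathers all forcing that is at least linear in $\psi,\psi_t,\gamma'$ and the Burgers nonlinearity $vv_x$ from $\partial_x\mathcal{Q}$.

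Next, for each $0\le k\le 5$ I would apply $\partial_x^k$ to this equation and pair with $\partial_x^k v$ in $L^2_N$. The dissipative top-order term integrates by parts to $-\delta\|\partial_x^{k+2}v\|_{L^2_N}^2$, the dispersive $\epsilon\partial_x^3$ contribution is skew-adjoint at leading order and produces only commutator terms with $\bar u$, and the anti-diffusive $-\delta \partial_x^2$ gives $+\delta\|\partial_x^{k+1}v\|_{L^2_N}^2$ which, via Gagliardo--Nirenberg (on $L^2_N$, with constants depending only on $T$),
\[
\|\partial_x^{k+1}v\|_{L^2_N}^2\le \eta\,\|\partial_x^{k+2}v\|_{L^2_N}^2+C_\eta\,\|v\|_{L^2_N}^2,
\]
is absorbed into a fraction of the damping plus $\|v\|_{L^2_N}^2$. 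The same interpolation handles the commutators with $\bar u$ coming from the convective term $\partial_x(\bar u v)$ as well as the lower-order dissipative commutators, since all of these are of order at most $k+1$ in $v$. The Burgers piece $vv_x$ and all terms in $\mathcal{R}$ are controlled by a Moser-type product estimate
\[
\|fg\|_{H^5_N}\lesssim \|f\|_{L^\infty_N}\|g\|_{H^5_N}+\|f\|_{H^5_N}\|g\|_{L^\infty_N},
\]
combined with $H^5_N\hookrightarrow W^{4,\infty}_N$ and the a priori smallness $\|(v,\psi_x)\|_{H^5_N}+\|\psi_t\|_{H^4_N}+|\gamma'|\le \eps_0$; each such term is bounded by $C\eps_0\|v\|_{H^5_N}^2+C\bigl(\|\psi_x\|_{H^6_N}^2+\|\psi_t\|_{H^3_N}^2+|\gamma'|^2\bigr)$, where the extra derivative in $\|\psi_x\|_{H^6_N}$ (versus $\|\psi_x\|_{H^5_N}$) is exactly the cost of the terms in $\mathcal{R}$ that carry $\partial_x^5\psi_x$ against $v$ or $\bar u$.

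Summing the $k=0,\ldots,5$ estimates with a standard weighted energy $\sum_{k=0}^5 \mu^k\|\partial_x^k v\|_{L^2_N}^2$ (small $\mu>0$) to force absorption of inter-level commutators, choosing $\eps_0$ small enough to hide the $C\eps_0\|v\|_{H^5_N}^2$ inside the damping, yields a differential inequality
\[
\frac{d}{dt}\|v\|_{H^5_N}^2\le -\theta\,\|v\|_{H^5_N}^2+C\bigl(\|v\|_{L^2_N}^2+\|\psi_x\|_{H^6_N}^2+\|\psi_t\|_{H^3_N}^2+|\gamma'|^2\bigr),
\]
and Grönwall's inequality gives the claimed bound. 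The constants $\theta,C,\eps_0$ depend only on $\bar u$ and $T$, not on $N$. The main obstacle I anticipate is Step~3: there is a nontrivial bookkeeping task in verifying that \emph{every} term in the long expression for $\mathcal{R}$ and every commutator between $\partial_x^k$ and the variable coefficients $\bar u$, $\psi_x/(1-\psi_x)$ etc.\ contains either a factor already small by the a priori hypothesis or a derivative of $v$ of order strictly less than $k+2$; once this is checked term-by-term, the fourth-order damping always wins.
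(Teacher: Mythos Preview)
Your proposal is correct and follows exactly the standard high-order energy argument exploiting the fourth-order dissipation in $\L$, which is precisely the approach the paper cites from \cite[Proposition 3.4]{BJNRZ13} (the paper itself omits the proof, noting it adapts directly to the periodic setting with $N$-independent constants). Your identification of the key mechanisms---interpolation to absorb the anti-diffusive $-\delta\partial_x^2$ term, Moser-type product estimates under the smallness hypothesis, and the extra derivative on $\psi_x$ forced by the structure of $\mathcal{R}$---is accurate.
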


The proof of the above estimate is by now standard, and can be found (for perturbations in $L^2(\RM)$) in \cite[Proposition 3.4]{BJNRZ13}.
The result in \cite{BJNRZ13} directly adapts to the current situation and is hence omitted.

\subsection{Nonlinear Iteration}

With the above nonlinear preliminaries, we now complete the proof of Theorem \ref{T:main}.  
Associated to the solution $(v,\gamma_t,\psi_x,\psi_t)$ of \eqref{e:mod_def}-\eqref{e:pert}
we define, so long as it is finite, the function
\[
\zeta(t):=\sup_{0\leq s\leq t}\left(\|v(s)\|_{H^5_N}^2+\|\psi_t(s)\|_{H^5_N}^2+\|\psi_x(s)\|_{H^6_N}^2+|\gamma_t(s)|\right)^{1/2}(1+s)^{1/4}.
\]
Using the linear estimates from Section \ref{S:lin} as well as the above nonlinear preparations, we now establish an estimate on $\zeta$ that will establish both the global existence 
of the nearby solution ${u}$ and the modulation functions, but will also establish our main stability result.

\begin{proposition}\label{p:induct}
Under the hypotheses of Theorem \ref{T:main}, there exist positive constants $C,\eps>0$, both independent of $N$, such that if $v(\cdot,0)$ satisfies
\[
E_0:=\|v(\cdot,0)\|_{L^1_N\cap H^5_N}\leq\eps~~{\rm and}~~\zeta(\tau)\leq\eps
\]
for some $\tau>0$, then we have
\[
\zeta(t)\leq C\left(E_0+\zeta(t)^2\right)
\]
for all $0\leq t\leq \tau$.
\end{proposition}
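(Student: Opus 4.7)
The plan is to bound each of the four summands comprising $\zeta(t)$ separately, by substituting the implicit equations \eqref{e:mod_def}--\eqref{e:pert_def} into the linear estimates of Proposition \ref{p:lin_est} and, for the top-order $H^5_N$ control of $v$, into the damping estimate of Proposition \ref{p:damping}. The a priori hypothesis $\zeta(\tau) \leq \varepsilon$ together with the definition of $\zeta$ furnishes, for $0 \leq s \leq t \leq \tau$, the pointwise controls
\[
\|v(s)\|_{H^5_N},\ \|\psi_t(s)\|_{H^5_N},\ \|\psi_x(s)\|_{H^6_N} \leq \zeta(t)(1+s)^{-1/4},\quad |\gamma_t(s)| \leq \zeta(t)^2(1+s)^{-1/2}.
\]
In particular, Sobolev embedding yields $\|\psi_x(s)\|_{L^\infty_N} \lesssim \varepsilon$, so the denominators $(1-\psi_x)^{-1}$ appearing in $\mathcal{R}$ are uniformly bounded and expand as convergent Neumann series.

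The first step is to bound the nonlinearity $\mathcal{N} = \mathcal{Q} + \mathcal{R} + \mathcal{L}(\psi_x v)$ of Proposition \ref{p:nonlin_pert}. Every term is at least quadratic in $(v,\psi_x,\psi_t,\gamma_t)$ and their spatial derivatives, and routine $L^2$-based product inequalities (trading one factor for its $L^\infty_N$ norm via Sobolev embedding) yield
\[
\|\mathcal{N}(s)\|_{L^1_N \cap H^k_N} \lesssim \zeta(t)^2 (1+s)^{-1/2}, \qquad 0 \leq k \leq 4,
\]
uniformly for $0 \leq s \leq t$, with the $H^5_N$ and $H^6_N$ regularities in the definition of $\zeta$ supplying precisely the derivatives needed for all factors arising in $\partial_x\mathcal{N}$.

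Applying the linear bounds \eqref{e:lin_bd1}--\eqref{e:lin_bd2} and \eqref{e:lin_bd5} to the implicit formula \eqref{e:mod_def} differentiated in $x$ and $t$, and crucially using the divergence form $\partial_x\mathcal{N}$ to unlock the faster kernels, one obtains, for each relevant $(\ell,m)$ with $\ell+m \geq 1$,
\[
\|\partial_x^\ell\partial_t^m\psi(t)\|_{L^2_N} \lesssim E_0(1+t)^{-1/4 - (\ell+m-1)/2} + \int_0^t (1+t-s)^{-1/4-(\ell+m-1)/2}\,\|\mathcal{N}(s)\|_{L^1_N\cap L^2_N}\,ds.
\]
Combining with the bound on $\mathcal{N}$ above and the convolution inequality $\int_0^t(1+t-s)^{-a}(1+s)^{-b}\,ds \lesssim (1+t)^{1-a-b}$ valid for $a+b > 1$ and $a,b<1$, the resulting exponents ($a = 1/4 + (\ell+m-1)/2$, $b = 1/2$) satisfy $a+b > 1$ exactly when $\ell+m \geq 1$, producing $\|\psi_x(t)\|_{H^6_N} + \|\psi_t(t)\|_{H^5_N} \lesssim (E_0 + \zeta(t)^2)(1+t)^{-1/4}$. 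A parallel argument applied to the implicit equation \eqref{e:pert_def} using \eqref{e:lin_bd3}--\eqref{e:lin_bd4}, after absorbing the prefactor $1-\chi\psi_x$ using $\|\psi_x\|_{L^\infty_N} \leq \varepsilon$, gives $\|v(t)\|_{L^2_N} \lesssim (E_0 + \zeta(t)^2)(1+t)^{-1/4}$. The full $H^5_N$ control of $v$ then follows from Proposition \ref{p:damping}: inserting the polynomial bounds just obtained on $\|v\|_{L^2_N}, \|\psi_x\|_{H^6_N}, \|\psi_t\|_{H^3_N}, |\gamma_t|$ into its integral right-hand side and using the exponential kernel $e^{-\theta(t-s)}$ to absorb the time convolution yields $\|v(t)\|_{H^5_N} \lesssim (E_0 + \zeta(t)^2)(1+t)^{-1/4}$. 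Finally, substituting these improved bounds back into the pointwise identity $\gamma_t(t) = \chi(t)\langle\psi^{adj},\partial_x\mathcal{N}(t)\rangle + \chi'(t)[\cdots]$ and using the compact support of $\chi'$ gives a bound on $|\gamma_t|$ consistent with its contribution to $\zeta$.

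The principal technical obstacle is the delicate bookkeeping of decay rates in the Duhamel convolutions, which are just barely closed at the $(1+t)^{-1/4}$ rate. The divergence structure $\partial_x\mathcal{N}$ provided by Proposition \ref{p:nonlin_pert} is essential, since it unlocks the faster kernels in \eqref{e:lin_bd2} and \eqref{e:lin_bd4} and thereby ensures the convolution constraint $a+b > 1$; this is precisely what permits the quadratic nonlinearity to integrate to the same $(1+t)^{-1/4}$ rate governing the homogeneous contribution. Once each summand in $\zeta(t)$ is controlled appropriately, summing, taking the square root, multiplying by $(1+t)^{1/4}$, and passing to the supremum over $[0,t]$ yields the desired inequality $\zeta(t) \leq C(E_0 + \zeta(t)^2)$.
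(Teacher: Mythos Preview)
Your approach is essentially the same as the paper's, and the overall scaffolding is correct. However, there is a bookkeeping error in your displayed Duhamel estimate for $\partial_x^\ell\partial_t^m\psi$: the inhomogeneous kernel should be $(1+t-s)^{-1/4-(\ell+m)/2}$, not $(1+t-s)^{-1/4-(\ell+m-1)/2}$. You correctly record the homogeneous rate coming from \eqref{e:lin_bd1}, but the Duhamel term has $s_{p,N}$ acting on $\partial_x\mathcal{N}$, and \eqref{e:lin_bd2} therefore supplies an \emph{additional} half-power of decay relative to the homogeneous rate --- exactly the ``faster kernel'' you invoke verbally but then fail to write down. As stated, at the bottom level $\ell+m=1$ (i.e., for $\psi_x$ and $\psi_t$ themselves) your exponent is $a=1/4$, giving $a+b=3/4<1$, so the convolution does not close; in fact it grows like $(1+t)^{1/4}$. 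Your arithmetic claim that $a+b>1$ ``exactly when $\ell+m\geq 1$'' is also inconsistent with your own stated $a$ (it would require $\ell+m\geq 2$).

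With the correct kernel exponent one has $a=3/4$ at $\ell+m=1$, whence $a+b=5/4>1$ and the convolution produces $(1+t)^{-1/4}$ as needed; this is exactly how the paper closes the estimate on $\|\psi_x\|_{H^6_N}$ and $\|\psi_t\|_{H^5_N}$ with the kernel $(1+t-s)^{-3/4}$. This is precisely the delicate bookkeeping you flag as the principal obstacle, so it is worth getting right. Once that exponent is corrected, the remainder of your argument --- the $L^2_N$ bound on $v$ via \eqref{e:lin_bd4}, the upgrade to $H^5_N$ via Proposition~\ref{p:damping}, and the $\gamma_t$ bound --- matches the paper's proof.
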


\begin{proof}
From Proposition \ref{p:nonlin_pert}, we note that
\begin{equation}\label{e:Nbd}
\left\|\mathcal{N}\right\|_{L^1_N\cap H^1_N}\leq C\left(\|(v,\psi_x)\|_{H^5_N}^2+\|\psi_t\|_{H^1_N}^2+|\gamma_t|^2\right)\leq C  \zeta(t)^2\left(1+t\right)^{-1/2}
\end{equation}
for some constant $C>0$ independent of $N$.  Using the linear estimates in Proposition \ref{p:lin_est}, as well as the conservative structure of the nonlinearity in \eqref{nonlin_pert}, it follows that
\begin{align*}
\left\|v(t)\right\|_{L^2_N}&\leq \left\|v(t)\psi_x(t)\right\|_{L^2_N} + CE_0\left(1+t\right)^{-1/4}+C\int_0^t(1+t-s)^{-3/4}\left\|\mathcal{N}(s)\right\|_{L^1_N\cap L^2_N}ds\\
&\leq \zeta(t)^2(1+t)^{-1/2} + CE_0(1+t)^{-1/4} + C\zeta(t)^2\int_0^t(1+t-s)^{-3/4}(1+s)^{-1/2}ds\\
&\leq C\left(E_0+\zeta(t)^2\right)(1+t)^{-1/4},
\end{align*}
where, again, the above constant $C>0$ is independent of $N$.  Similarly, from \eqref{e:mod_def} we have for each $1\leq\ell\leq 6$
\[
\partial_x^\ell\psi_x(x,t) = \chi(t)\left[\partial_x^{\ell+1}s_{p,N}(t)v(x,0) + \int_0^t\partial_x^{\ell+1}s_{p,N}(t-s)\partial_x\mathcal{N}(x,s)ds\right]
\]	
and hence
\begin{align*}
\left\|\psi_x\right\|_{H^6_N}&\leq CE_0(1+t)^{-1/4}+C\int_0^t\left(1+t-s\right)^{-3/4}\left\|\partial_x\mathcal{N}(\cdot,s)\right\|_{L^2_N}ds\\
&\leq C\left(E_0+\zeta(t)^2\right)(1+t)^{-1/4}
\end{align*}
again where the constant $C>0$ is independent of $N$.  A completely analogous calculation gives
\[
\left\|\psi_t\right\|_{H^5_N}\leq C\left(E_0+\zeta(t)^2\right)(1+t)^{-1/4},
\]
and an application of integration by parts yields
\[
|\gamma_t(t)|\leq\left|\left<\psi^{adj},\partial_x\mathcal{N}(\cdot,t)\right>_{L^2_N}\right|\leq C\left\|\mathcal{N}(\cdot,t)\right\|_{L^1_N}\leq C\zeta(t)^2(1+t)^{-1/2},
\]
where here we used an $L^1-L^\infty$ bound to control the inner product\footnote{Otherwise, one would get a $\mathcal{O}(N)$-growth from the $\|\partial_x\psi^{adj}\|_{L^2_N}$ term.}.
Using now the nonlinear damping result in Proposition \ref{p:damping}, it follows that
\begin{align*}
\|v(t)\|_{H^5_N}^2&\leq CE_0^2 e^{-\theta t}+C\left(E_0+\zeta(t)^2\right)^2\int_0^t e^{-\theta(t-s)}(1+s)^{-1/2}ds\\
&\leq C\left(E_0+\zeta(t)^2\right)^2 (1+t)^{-1/2}.
\end{align*}
Noting that $\zeta(t)$ is a non-decreasing function, it follows for all $t\in(0,\tau)$ that
\[
\left(\|v(s)\|_{H^5_N}^2+\|\psi_t(s)\|_{H^5_N}^2+\|\psi_x(s)\|_{H^6_N}^2+|\gamma_t(s)|\right)^{1/2}(1+s)^{1/4}\leq C\left(E_0+\zeta(t)\right)^2,
\]
valid for all $s\in(0,t)$.  Taking the supremum over all $s\in(0,t)$ yields the desired result.
\end{proof}

With Proposition \ref{p:induct} established, the proof of Theorem \ref{T:main} follows directly.  Indeed, since $\zeta(t)$ is continuous for so long as it remains
small, it follows from Proposition \ref{p:induct} that if $E_0<\frac{1}{4C}$ then we have $0\leq\zeta(t)\leq 2CE_0$ for all $t\geq 0$.  Noting that the constant $C>0$
from Proposition \ref{p:induct} is independent of $N$ and setting
\[
\widetilde{\psi}(x,t) = \frac{1}{N}\gamma(t)+\psi(x,t),
\]
the stability estimates \eqref{e:stab_estimate} in Theorem \ref{T:main} follow.  Finally, using that $\zeta(t)\leq 2CE_0$ for all $t\geq 0$ it follows
from \eqref{e:Nbd} and applying \eqref{e:lin_bd5} to the implicit representation \eqref{e:mod_def} that
\[
\left\|\psi(\cdot,t)\right\|_{L^\infty}\leq CE_0+CE_0^2\int_0^t(1+t-s)^{-1/2}(1+s)^{-1/2}ds\leq CE_0,
\]
yielding the $L^\infty$ estimate \eqref{e:stab_bd}.

\appendix
\section{Proof of Nonlinear Perturbation Equations}\label{A:pert_eqns}

In this appendix, we present the details of the derivation of the nonlinear perturbation equations in Proposition \ref{p:nonlin_pert}.  To this end,
first set
\[
\widetilde{u}(x,t):={u}\left(x + \left(\delta M\right)t - \frac{1}{N}\gamma(t) - \psi(x,t), t\right)
\]
and note that
\[
\widetilde{u}_x = \frac{1}{1-\psi_x}\d_x(v + \bar{u})
\]
and
\[
\widetilde{u}_t = v_t +  \frac{1}{1-\psi_x}\left(\frac{1}{N}\gamma' + \psi_t - \frac{1}{N}\delta M\right)\d_x(v + \bar{u}).
\]
Since ${u}(x,t)$ is a solution to \eqref{e:ks}, in the traveling coordinate frame $x-ct$, it follows that
\begin{align*}
    &(1-\psi_x)v_t + \left(\frac{1}{N}\gamma' + \psi_t - \frac{1}{N}\delta M\right)\d_x\left(v+\bar{u}\right) - c \d_x\left(v+\bar{u}\right)\\
    &\qquad + \eps\d_x\left(\frac{1}{1-\psi_x}\d_x\left(\frac{1}{1-\psi_x}\d_x\left(v+\bar{u}\right)\right)\right) + \delta\d_x\left(\frac{1}{1-\psi_x}\d_x\left(v + \bar{u}\right)\right)\\
    &\qquad+\delta\d_x\left(\frac{1}{1-\psi_x}\d_x\left(\frac{1}{1-\psi_x}\d_x\left(\frac{1}{1-\psi_x}\d_x\left(v + \bar{u}\right)\right)\right)\right)\\
    &\qquad+\left(v + \bar{u} + \frac{1}{N}\delta M\right)\d_x\left(v+\bar{u}\right) = 0
\end{align*}
Clearly, the contributions from the $\delta M$ terms cancel, which is a reflection of the Galilean invariance of \eqref{e:ks}.
Now, subtracting off the profile equation \eqref{e:profile} for $\bar{u}$ yields
\begin{align*}
    &v_t - (\psi_x v)_t + (\psi_t v)_x + \psi_t\bar{u}' + \left(\frac{1}{N}\gamma'v\right)_x + \frac{1}{N}\gamma'\bar{u}' - cv_x\\
&\quad+ \eps\d_x\left(\frac{1}{1-\psi_x}\d_x\left(\frac{1}{1-\psi_x}\d_x\left(v+\bar{u}\right)\right)\right) - \eps\bar{u}'''
+ \delta\d_x\left(\frac{1}{1-\psi_x}\d_x\left(v + \bar{u}\right)\right)-\delta\bar{u}''\\
    &\qquad+\delta\d_x\left(\frac{1}{1-\psi_x}\d_x\left(\frac{1}{1-\psi_x}\d_x\left(\frac{1}{1-\psi_x}\d_x\left(v + \bar{u}\right)\right)\right)\right)-\delta\bar{u}''''\\
    &\qquad+ \frac{1}{2}\d_x\left(v^2\right) + \d_x \left(\bar{u}v\right) = 0
\end{align*}
or, equivalently\footnote{By adding and subtracting $(\d_t - \mathcal{L})\left(v + \frac{1}{N}\gamma \bar{u}' + \psi\bar{u}'\right)$.}
\begin{align*}
    &(\d_t - \mathcal{L})\left(v + \frac{1}{N}\gamma \bar{u}' + \psi\bar{u}'\right) = (\psi_x v)_t - \left(\psi_t v+ \frac{1}{N}\gamma'v\right)_x- \frac{1}{2}\d_x\left(v^2\right)\\
    &\qquad - \eps\left[\d_x\left(\frac{1}{1-\psi_x}\d_x\left(\frac{1}{1-\psi_x}\bar{u}'\right)\right) - \bar{u}''' -\left(\psi_x\bar{u}'\right)_{xx} - \left(\psi_x\bar{u}''\right)_x\right]\\
    &\qquad - \eps\left[\d_x\left(\frac{1}{1-\psi_x}\d_x\left(\frac{1}{1-\psi_x}v_x\right)\right) - v_{xxx}\right]- \delta\left[\d_x\left(\frac{1}{1-\psi_x}\bar{u}'\right)-\bar{u}'' - \left(\psi_x\bar{u}'\right)_x\right]\\
    &\qquad- \delta\left[\d_x\left(\frac{1}{1-\psi_x}v_x\right)- v_{xx}\right]-\delta\left[\d_x\left(\frac{1}{1-\psi_x}\d_x\left(\frac{1}{1-\psi_x}\d_x\left(\frac{1}{1-\psi_x}v_x\right)\right)\right)- v_{xxxx}\right]\\
    &\qquad-\delta\left[\d_x\left(\frac{1}{1-\psi_x}\d_x\left(\frac{1}{1-\psi_x}\d_x\left(\frac{1}{1-\psi_x}\bar{u}'\right)\right)\right)-\bar{u}'''' - \left(\psi_x\bar{u}'\right)_{xxx} - \left(\psi_x\bar{u}''\right)_{xx} - \left(\psi_x\bar{u}'''\right)_x\right].
\end{align*}
Using the identity $\frac{1}{1-\psi_x} = 1 + \frac{\psi_x}{1-\psi_x}$ and rearranging now yields
\[
(\d_t - \mathcal{L})\left(v + \frac{1}{N}\gamma \bar{u}' + \psi\bar{u}'\right) = \d_x \mathcal{Q} + \d_x\mathcal{R} + (\psi_x v)_t.
\]
Adding and subtracting $\mathcal{L}(\psi_x v)$ completes the proof.

\begin{remark}
One subtle point in our nonlinear analysis is the placement of the modulation functions in the decomposition \eqref{e:pert}.  
From the previous work \cite{SS11,SS11_2} and based on the linear analysis in Section \ref{S:lin}, it may seem more natural to 
use the slightly different decomposition
\begin{equation}\label{e:pert_alt}
\widetilde{u}\left(x-\psi(x,t),t\right)=\bar{u}\left(x-t\left(\delta M\right) + \frac{1}{N}\gamma(t)\right)+\delta M+v\left(x-t\left(\delta M\right),t\right)
\end{equation}
where $\gamma:\RM_+\to\RM$ and $\psi:\RM\times\RM_+\to\RM$ are functions to be determined as in the analysis above. 
However, one can readily check in the derivation of the nonlinear perturbation equations above that the decomposition \eqref{e:pert_alt}
introduces terms that are
\[
\mathcal{O}\left(\frac{1}{N}\gamma(t) + (\delta M)t\right)
\]
and hence, in particular, are not decaying in time.  While this is not a problem in the traditional fixed $N$ theory (with exponential semigroup bounds), 
it is insufficient to close a nonlinear iteration scheme when using the more delicate uniform algebraic bounds.  This is reminiscent of the similar observation
made in \cite[Remark 2.4]{JNRZ_Invent} in the context of localized perturbations of periodic waves in dissipative systems.
\end{remark}

\bibliographystyle{abbrv}
\bibliography{KS}

\end{document}